\documentclass [a4paper,10pt]{article}
\usepackage{amsmath}
\usepackage{amsfonts}
\usepackage{amssymb}
\usepackage{array,longtable}
\usepackage{amscd}
\usepackage[cp1251]{inputenc}
\usepackage[english]{babel}
\usepackage[usenames]{color}
\usepackage[matrix,arrow,curve]{xy}

\begin{document}
\newtheorem{definition}{Definition}
\newtheorem{theorem}{Theorem}
\newtheorem{lemma}{Lemma}
\newtheorem{proposition}{Proposition}
\newtheorem{conjecture}{Conjecture}
\newtheorem{criterion}{Criterion}
\newtheorem{observation}{Observation}
\newtheorem{corollary}{Corollary}
\newcommand{\diag}{{\rm diag}}
\newcommand{\tr}{{\rm Tr}}
\begin{center}
{\large {\bf A RECURSIVE DETERMINANTAL FRAMEWORK \\ FOR TESTING $D$-STABILITY. I}} \\[0.5cm]
{\bf Olga Y. Kushel} \\[0.5cm]
Institute of Mathematics of NAS of Belarus, \\ Surganova St. 11, \\  220072 Minsk, Republic of Belarus \\
kushel@mail.ru
\end{center}
\begin{abstract}

The concept of matrix $D$-stability, introduced in 1958 by Arrow and McManus is of major importance due to the variety of its applications. However, characterization of matrix $D$-stability for dimensions $n > 4$ is considered as a hard open problem. In this paper, we propose a recursive delete/zero algorithm for testing matrix $D$-stability. The algorithm generates a binary tree of parameter-dependent matrices ${\mathbf A}_s$ and yields recurrence relations for the real and imaginary parts of $\det({\mathbf A}_s)$. These relations lead to a hierarchy of sufficient for $D$-stability conditions, expressed in terms of principal minors. Numerical experiments confirm the practical feasibility of the approach.  

Hurwitz stability, $D$-stability, principal minors, determinantal inequalities, recursive algorithm.

MSC[2020] 15A12, 15A18, 15A75
\end{abstract}
 \section*{Introduction}
Let $\mathcal{M}^{n\times n}$ be
the set of all square real\ $n\times n$ matrices; $\sigma (\mathbf{A})$ be the
spectrum of a matrix $\mathbf{A}$ $\in \mathcal{M}^{n\times n}$ (i.e. the set of all eigenvalues of $\mathbf A$ defined as zeroes of its characteristic polynomial $f_{\mathbf A}(\lambda):= \det({\mathbf A} - \lambda{\mathbf I})$). Recall, that a matrix $\mathbf{A}$ $\in \mathcal{M}^{n\times n}$ is called {\it (positive) stable} if ${\rm Re}(\lambda )>0$ for all $\lambda \in \sigma (\mathbf{A})$ (see \cite{CA}, \cite{HS} and many others). 
 
The concept of matrix $D$-stability, first introduced by Arrow and McManus in 1958 (\cite{AM}), has proven to be of lasting importance due to its wide-ranging applications in economics, ecology, and control theory (\cite{KAB}, \cite{KU2}). A matrix $\mathbf{A} \in \mathcal{M}^{n\times n}$ is
called {\it $D$-stable} if ${\rm Re}(\lambda )>0$ for all $\lambda \in
\sigma (\mathbf{DA})$, where ${\mathbf D}$ is an arbitrary positive diagonal matrix. This property ensures, for instance, that the equilibrium of a multi-market economy remains stable regardless of individual market adjustment speeds (see, for example, \cite{GZ}, \cite{QR}).

Despite over six decades of intensive research, a complete characterization of $D$-stable matrices remains a formidable open problem for dimensions $n > 4$. The primary difficulty stems from the fact that the condition of $D$-stability involves $n$ independent, continuously varying parameters. While necessary conditions, such as the requirement that a positive $D$-stable matrix be a $P^+_0$-matrix \cite{QR}, are well-understood, and sufficient conditions exist for special subclasses \cite{JOHN1}, a verifiable criterion applicable to matrices of arbitrary size has remained elusive. The few available necessary and sufficient conditions, such as Johnson's criterion \cite{JOHN5} based on the non-vanishing of $\det({\mathbf A} + i{\mathbf D})$ for all positive diagonal ${\mathbf D}$, are elegant but computationally intractable for larger systems, as they require checking a parameterized determinant over an unbounded domain.

In this paper, we propose a new, constructive approach to testing matrix $D$-stability. Our method directly addresses the computational challenges inherent in the problem. Basing our work on Johnson's $D$-stability criterion \cite{JOHN5}, we develop a recursive "delete/zero" algorithm to expand 
$\det({\mathbf A} + i{\mathbf D})$ in terms of the diagonal entries of $\mathbf D$. This expansion recursively decomposes the $D$-stability criterion into relations involving the determinants of smaller matrices. Thus we obtain a hierarchical set of conditions expressed solely as inequalities between the principal minors of the original matrix $\mathbf A$. A key feature of our framework is that it allows for a trade-off between computational complexity and conservatism. Crucially, the algorithm can be truncated at any depth, producing valid sufficient conditions for $D$-stability whose complexity and conservatism are tunable. This provides a practical bridge between simple but conservative tests and the computationally intractable exact condition. We demonstrate the advantages of our approach by testing a known $5 \times 5$ $D$-stable matrix from the literature \cite{OLP}, and by numerical experiments (generating $N > 1000 000$ stable matrices and checking them for $D$-stability). As a theoretical culmination of our recursive process, we present a set of sufficient conditions for $D$-stability in the form of explicit inequalities involving the principal minors of an 
$n \times n$ matrix $\mathbf A$. These inequalities define a class of matrices not captured by previously known sufficient criteria.

The paper is organized as follows. Section 1 introduces notations, basic results on determinant expansions and proves the core determinantal identity. Section 2 reviews fundamental properties of D-stable matrices and recalls Johnson's criterion. Section 3 presents the Delete/Zero algorithm and derives the core recurrence relations. Section 4 analyzes the first recursion step, culminating in Theorem \ref{Step1}. Section 5 gives detailed analysis of the second step, including degenerate case. The test for $D$-stability with numerical experiments is contained in Section 6.

 \section{Core matrix identities}
 \subsection{Preliminary definitions and notations}
 Here, as usual, we use the notation $[n]$ for the set of indices $\{1,2, \ \ldots, \ n\}$. Given a set $\alpha = (i_1, \ \ldots, i_k)$, with $1 \leq i_1 < \ldots < i_k \leq n$, we denote $N(\alpha)$ the cardinality of $\alpha$, i.e. the number of elements in $\alpha$ (obviously, $N(\emptyset) = 0$). 
 
 Given an $n \times n$ complex matrix ${\mathbf A} = \{a_{ij}\}_{i,j = 1}^n$, we introduce the following notations:
\begin{enumerate} 
\item[-] $A \begin{pmatrix}\alpha \\ \alpha \end{pmatrix}$ (or $A\begin{pmatrix}i_1 & \ldots & i_k \\ i_1 & \ldots & i_k \end{pmatrix}$) for the principal minor of $\mathbf A$, which lies on the intersection of the rows and columns with indices from $\alpha = (i_1, \ \ldots, i_k)$.
\item[-] ${\mathbf A}|_{n}$ for the $(n-1) \times (n-1)$ principal submatrix of $\mathbf A$, obtained by deleting the $n$th row and the $n$th column.
\end{enumerate} 
Then ${\mathbf A} = \{a_{ij}\}_{i,j = 1}^n$ can be written in the following block form
\begin{equation}\label{ym2}
{\mathbf A} = \begin{pmatrix} {\mathbf A}|_{n} & \overline{{\mathbf a}}_{1n} \\
\underline{{\mathbf a}}_{n1} & a_{nn} \end{pmatrix},
\end{equation}
where $\overline{{\mathbf a}}_{1n} = (a_{1n}, \ \ldots, \ a_{n-1, n})^T$, $\underline{{\mathbf a}}_{n1} = (a_{n1}, \ \ldots, \ a_{n, n-1})$.

Assuming that $a_{nn} \neq 0$, we use the notation ${\mathbf A}|_{a_{nn}}$ for the {\it Schur complement} of $a_{nn}$ in $\mathbf A$, defined as follows:
\begin{equation}\label{sh1}{\mathbf A}|_{a_{nn}}  = {\mathbf A}|_n - \frac{1}{a_{nn}}\overline{{\mathbf a}}_{1n}\underline{{\mathbf a}}_{n1}.\end{equation}

Now consider a diagonal matrix ${\mathbf D} \in {\mathcal M}^{n \times n}$:
$${\mathbf D} = \diag\{d_1, \ \ldots, \ d_{n-1}, \ d_n\}, \qquad d_i \in {\mathbb R}.$$
We shall use the following notations:
\begin{enumerate}
    \item[-] ${\mathbf D}|_n$ for the $(n-1) \times (n-1)$ principal submatrix of $\mathbf D$, obtained by deleting the $n$th row and the $n$th column:
    $${\mathbf D}|_n = \diag\{d_1, \ \ldots, \ d_{n-1}\}, \qquad d_i \in {\mathbb R}.$$
    \item[-] ${\mathbf D}|_n^0$, for the diagonal matrix defined by setting $d_n:=0$:
$${\mathbf D}|_n^0 = \diag\{d_{1}, \ \ldots, \ d_{n-1}, \ 0\}, \qquad d_i \in {\mathbb R}. $$
\end{enumerate}
Note, that, according to the above notations, 
$$ {\mathbf D} = \diag\{{\mathbf D}|_n, \ d_n\}.$$
\subsection{Determinant expansions}
Later, we shall use the following matrix determinant expansion (see, for example, \cite{BER}, p. 133, Fact 2.14.2).
\begin{lemma}\label{1}
Let an $n \times n$ complex matrix ${\mathbf A} = \{a_{ij}\}_{i,j = 1}^n$ be represented in Form \eqref{ym2}. Then
\begin{enumerate}
    \item[\rm 1.] $\det{\mathbf A} = \det({\mathbf A}|_{n})(a_{nn} - \underline{{\mathbf a}}_{n1}({\mathbf A}|_{n})^{-1}\overline{{\mathbf a}}_{1n})$ if $\det({\mathbf A}|_{n}) \neq 0$;
    \item[\rm 2.] $\det{\mathbf A} = a_{nn}\det({\mathbf A}|_{n} - \frac{1}{a_{nn}}\overline{{\mathbf a}}_{n1}\underline{{\mathbf a}}_{1n})$ if $a_{nn} \neq 0$;
    \item[\rm 3.] $\det{\mathbf A} = - \underline{{\mathbf a}}_{n1}{\rm adj}({\mathbf A}|_{n})\overline{{\mathbf a}}_{1n}$ if  $a_{nn} = 0$ or $\det({\mathbf A}|_{n}) = 0$.
\end{enumerate}
\end{lemma}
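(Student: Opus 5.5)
Parts 1 and 2 both come from block Gaussian elimination applied to Form \eqref{ym2}, and Part 3 comes from the bordered-determinant (Cauchy) expansion together with a continuity argument. Throughout I read the vectors in Part 2 as $\overline{{\mathbf a}}_{1n}$ (column) and $\underline{{\mathbf a}}_{n1}$ (row), consistent with \eqref{sh1}; the indices in the statement are evidently a typo.

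\textbf{Part 1.} Suppose $\det({\mathbf A}|_n)\neq 0$. I would use the factorization
$$
{\mathbf A}=\begin{pmatrix} {\mathbf I} & 0\\ \underline{{\mathbf a}}_{n1}({\mathbf A}|_n)^{-1} & 1\end{pmatrix}
\begin{pmatrix} {\mathbf A}|_n & \overline{{\mathbf a}}_{1n}\\ 0 & a_{nn}-\underline{{\mathbf a}}_{n1}({\mathbf A}|_n)^{-1}\overline{{\mathbf a}}_{1n}\end{pmatrix}.
$$
It is verified by multiplying out the blocks. The first factor is unit lower triangular, so its determinant is $1$. The second factor is block upper triangular, so its determinant is the product of the diagonal blocks, which gives the formula.

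\textbf{Part 2.} Suppose $a_{nn}\neq 0$. The same idea applies with the pivot taken at the scalar $a_{nn}$:
$$
{\mathbf A}=\begin{pmatrix} {\mathbf I} & a_{nn}^{-1}\overline{{\mathbf a}}_{1n}\\ 0 & 1\end{pmatrix}
\begin{pmatrix} {\mathbf A}|_{a_{nn}} & 0\\ \underline{{\mathbf a}}_{n1} & a_{nn}\end{pmatrix},
$$
where ${\mathbf A}|_{a_{nn}}$ is the Schur complement \eqref{sh1}. Taking determinants gives $\det{\mathbf A}=a_{nn}\det({\mathbf A}|_{a_{nn}})$.

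\textbf{Part 3.} I would start from the general bordered identity
$$
\det{\mathbf A}=a_{nn}\det({\mathbf A}|_n)-\underline{{\mathbf a}}_{n1}\,{\rm adj}({\mathbf A}|_n)\,\overline{{\mathbf a}}_{1n},
$$
which holds for every matrix. To prove it, first take $\det({\mathbf A}|_n)\neq 0$. Substituting $({\mathbf A}|_n)^{-1}={\rm adj}({\mathbf A}|_n)/\det({\mathbf A}|_n)$ into Part 1 and expanding gives the identity directly. Both sides are polynomials in the entries of ${\mathbf A}$, and matrices with $\det({\mathbf A}|_n)\neq 0$ are dense. Hence the identity extends to all complex matrices.

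A purely algebraic alternative is to expand $\det{\mathbf A}$ along the last row and then along the last column. The term that uses both $a_{nn}$ and ${\mathbf A}|_n$ gives $a_{nn}\det({\mathbf A}|_n)$. The remaining terms are $a_{ni}a_{jn}$ times signed cofactors of ${\mathbf A}|_n$, and these assemble into $-\underline{{\mathbf a}}_{n1}{\rm adj}({\mathbf A}|_n)\overline{{\mathbf a}}_{1n}$.

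Once the identity is established, the product $a_{nn}\det({\mathbf A}|_n)$ vanishes whenever $a_{nn}=0$ or $\det({\mathbf A}|_n)=0$, and Part 3 follows.

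The only real care needed is the sign and transpose bookkeeping in the cofactor version of Part 3, because ${\rm adj}$ is the transpose of the cofactor matrix. The density argument avoids this entirely, so I would use it.
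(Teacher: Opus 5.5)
Your proof is correct; it takes a different route only in the sense that the paper gives no argument for this lemma at all and simply quotes it from Bernstein (Fact 2.14.2).

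Your two block factorizations (pivoting on ${\mathbf A}|_n$ for Part 1 and on $a_{nn}$ for Part 2), together with the bordered identity and a density argument for Part 3, are the standard derivation behind that citation. They make the section self-contained. Your reading of the Part 2 vectors as $\overline{{\mathbf a}}_{1n}\underline{{\mathbf a}}_{n1}$, consistent with \eqref{sh1}, is the right one.

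The citation buys brevity. Your route also produces the hypothesis-free identity
\[
\det{\mathbf A} = a_{nn}\det({\mathbf A}|_n) - \underline{{\mathbf a}}_{n1}\,{\rm adj}({\mathbf A}|_n)\,\overline{{\mathbf a}}_{1n},
\]
which contains Parts 1 and 3 as special cases. It is also exactly what the paper needs downstream. The matrices ${\mathbf A}\pm i{\mathbf D}$ and ${\mathbf A}\pm i{\mathbf D}|_n^0$ share the leading block ${\mathbf A}|_n\pm i{\mathbf D}|_n$ and the same border vectors. They differ only in the corner entry, $a_{nn}\pm i d_n$ versus $a_{nn}$. Applying your identity to both and subtracting gives \eqref{expana!} in one line. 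This makes the case split on $a_{nn}$ and on $\det({\mathbf A}|_n\pm i{\mathbf D}|_n)$ in the proof of Lemma~\ref{explemma} unnecessary.
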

Following \cite{CA}, we say that a matrix $\mathbf A$ is
\begin{enumerate}
\item[-] {\it of type I} (with respect to index $n$), if in the pair $(a_{nn}, \det({\mathbf A}|_{n}))$ one element vanishes without the other vanishes also;
\item[-] {\it of type II}, otherwise.
\end{enumerate}
Obviously, a matrix $\mathbf A$ with $a_{nn} \det({\mathbf A}|_{n}) \neq 0$ as well as with $a_{nn} = \det({\mathbf A}|_{n}) = 0$ is of type II.

\subsection{Branched identities}
\begin{lemma} Given an arbitrary matrix ${\mathbf A} \in {\mathcal M}^{n \times n}$ and a diagonal matrix ${\mathbf D} \in {\mathcal M}^{n \times n}$.
Then the following identities hold:
\begin{equation}\label{id1}
    \det({\mathbf A} \pm i{\mathbf D}|_n^0) = a_{nn}\det({\mathbf A}|_{a_{nn}} \pm i {\mathbf D}|_n), \qquad \mbox{if} \qquad a_{nn} \neq 0;\end{equation}
\begin{equation}\label{id2}\det({\mathbf A} \pm i{\mathbf D}|_n^0) = - \underline{{\mathbf a}}_{n1}{\rm adj}({\mathbf A}|_{n}\pm i{\mathbf D}|_{n})\overline{{\mathbf a}}_{1n}, \qquad \mbox{if} \qquad a_{nn} = 0.\end{equation}
\end{lemma}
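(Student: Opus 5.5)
The plan is to apply Lemma \ref{1} to the complex matrix ${\mathbf B} := {\mathbf A} \pm i{\mathbf D}|_n^0$, written in the block form \eqref{ym2}. Since ${\mathbf D}|_n^0 = \diag\{{\mathbf D}|_n, 0\}$, adding $\pm i{\mathbf D}|_n^0$ changes only the leading $(n-1)\times(n-1)$ block. Concretely,
$$
{\mathbf B} = \begin{pmatrix} {\mathbf A}|_{n} \pm i{\mathbf D}|_n & \overline{{\mathbf a}}_{1n} \\
\underline{{\mathbf a}}_{n1} & a_{nn} \end{pmatrix}.
$$
Thus ${\mathbf B}|_n = {\mathbf A}|_n \pm i{\mathbf D}|_n$, while the off-diagonal column $\overline{{\mathbf a}}_{1n}$, the row $\underline{{\mathbf a}}_{n1}$ and the corner entry $b_{nn} = a_{nn}$ are exactly those of $\mathbf A$. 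This observation is the whole point: vanishing the $n$th diagonal entry of $\mathbf D$ keeps the bordering data real and untouched.

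For \eqref{id1}, assume $a_{nn}\neq 0$, so $b_{nn} \neq 0$, and use item 2 of Lemma \ref{1}. This gives
$$\det{\mathbf B} = a_{nn}\det\Bigl({\mathbf A}|_n \pm i{\mathbf D}|_n - \tfrac{1}{a_{nn}}\overline{{\mathbf a}}_{1n}\underline{{\mathbf a}}_{n1}\Bigr).$$
By definition \eqref{sh1}, the matrix inside the determinant equals ${\mathbf A}|_{a_{nn}} \pm i{\mathbf D}|_n$. (Item 2 of Lemma \ref{1} as printed has the subscripts of the border vectors swapped; it should be read as the usual Schur complement formula, column times row, which is consistent with \eqref{sh1}.)

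For \eqref{id2}, assume $a_{nn}=0$, so $b_{nn}=0$, and use item 3 of Lemma \ref{1}. It gives $\det{\mathbf B} = -\underline{{\mathbf a}}_{n1}\,{\rm adj}({\mathbf B}|_n)\,\overline{{\mathbf a}}_{1n}$, and substituting ${\mathbf B}|_n = {\mathbf A}|_n \pm i{\mathbf D}|_n$ yields the claim. This case needs no invertibility of ${\mathbf B}|_n$, since the adjugate is always defined.

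There is no real obstacle here. The only points needing care are that Lemma \ref{1} is stated for complex matrices, so it applies to $\mathbf B$, and that the block decomposition of $\mathbf B$ is exactly as written. If one prefers to verify item 3 independently, expand $\det{\mathbf B}$ along the last row and then the last column (the Cauchy expansion $\det{\mathbf B} = b_{nn}\det {\mathbf B}|_n - \underline{{\mathbf a}}_{n1}{\rm adj}({\mathbf B}|_n)\overline{{\mathbf a}}_{1n}$) and set $b_{nn}=0$.
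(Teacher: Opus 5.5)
Your proposal is correct and follows the paper's own proof essentially step for step. You write ${\mathbf A} \pm i{\mathbf D}|_n^0$ in block form, note that only the leading block ${\mathbf A}|_n \pm i{\mathbf D}|_n$ changes, and then apply Lemma~\ref{1}: Part~2 together with the Schur complement \eqref{sh1} when $a_{nn}\neq 0$, and Part~3 when $a_{nn}=0$. Your remark on the border vectors is well taken: the rank-one term must be the column-times-row product $\overline{{\mathbf a}}_{1n}\underline{{\mathbf a}}_{n1}$. The paper's displays, both in Lemma~\ref{1} and in this proof, have these vectors mislabeled or in the wrong order.
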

{\bf Proof.} Indeed, Representation \eqref{ym2} of the matrix ${\mathbf A} \pm i{\mathbf D}|_n^0$ gives
$${\mathbf A} \pm i{\mathbf D}|_n^0 = \begin{pmatrix} {\mathbf A}|_{n} & \overline{{\mathbf a}}_{1n} \\
\underline{{\mathbf a}}_{n1} & a_{nn} \end{pmatrix} \pm i\begin{pmatrix} {\mathbf D}|_{n} & \overline{{\mathbf 0}} \\
\underline{{\mathbf 0}} & 0 \end{pmatrix} = \begin{pmatrix} {\mathbf A}|_{n}\pm i{\mathbf D}|_{n} & \overline{{\mathbf a}}_{1n} \\
\underline{{\mathbf a}}_{n1} & a_{nn} \end{pmatrix}.$$
Let $a_{nn} \neq 0$. Applying Lemma \ref{1}, Part 2 to ${\mathbf A} \pm i{\mathbf D}|_n^0$, we obtain
$$\det({\mathbf A} \pm i{\mathbf D}|_n^0) = a_{nn}\det({\mathbf A}|_n \pm i {\mathbf D}|_{n} - \frac{1}{a_{nn}}\underline{{\mathbf a}}_{n1}\overline{{\mathbf a}}_{1n})= \ldots $$
Re-writing through the Schur complement (Formula \eqref{sh1}), we get:
 $$\ldots = a_{nn}\det({\mathbf A}|_n  - \frac{1}{a_{nn}}\underline{{\mathbf a}}_{n1}\overline{{\mathbf a}}_{1n} \pm i {\mathbf D}|_{n}) = a_{nn}\det({\mathbf A}|_{a_{nn}} \pm i {\mathbf D}|_n).$$
 which gives exactly Identity \eqref{id1}.
 
 For the proof of Identity \eqref{id2}, it is enough apply Lemma \ref{1}, Part 3 to ${\mathbf A} \pm i{\mathbf D}|_n^0$ with $a_{nn} = 0$. $\square$

\begin{lemma}\label{explemma} Given an arbitrary matrix ${\mathbf A} \in {\mathcal M}^{n \times n}$ and a diagonal matrix ${\mathbf D} \in {\mathcal M}^{n \times n}$, the following identity hold:
\begin{equation}\label{expana!}\det({\mathbf A} \pm i{\mathbf D}) = \pm id_n\det({\mathbf A}|_{n}\pm i{\mathbf D}|_{n}) + \det({\mathbf A} \pm i {\mathbf D}|_n^0). \end{equation}
In particular,
\begin{enumerate}
    \item[\rm 1.] If $a_{nn} \neq 0$ and $\det({\mathbf A}|_{n} \pm i{\mathbf D}|_{n}) \neq 0$ then \begin{equation}\label{expan}\det({\mathbf A} \pm i{\mathbf D}) = \pm id_n\det({\mathbf A}|_{n}\pm i{\mathbf D}|_{n}) + a_{nn}\det({\mathbf A}|_{a_{nn}} \pm i {\mathbf D}|_n). \end{equation}
    \item[\rm 2.] If $\det({\mathbf A}|_{n} \pm i{\mathbf D}|_{n}) \neq 0$ then \begin{equation}\label{expana}\det({\mathbf A} \pm i{\mathbf D}) = \pm id_n\det({\mathbf A}|_{n}\pm i{\mathbf D}|_{n}) + \det({\mathbf A} \pm i {\mathbf D}|_n^0). \end{equation}
    \item[\rm 3.] If $\det({\mathbf A}|_{n} \pm i{\mathbf D}|_{n}) = 0$, $a_{nn} = 0$ then \begin{equation}\label{expanb}\det({\mathbf A} \pm i{\mathbf D}) =  \det({\mathbf A} \pm i {\mathbf D}|_n^0). \end{equation}
\end{enumerate}
\end{lemma}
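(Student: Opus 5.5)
The main identity \eqref{expana!} follows from the linearity of the determinant in its last column; no invertibility assumption is needed. I would write the matrix ${\mathbf A} \pm i{\mathbf D}$ in block Form \eqref{ym2}:
$${\mathbf A} \pm i{\mathbf D} = \begin{pmatrix} {\mathbf A}|_{n}\pm i{\mathbf D}|_{n} & \overline{{\mathbf a}}_{1n} \\ \underline{{\mathbf a}}_{n1} & a_{nn}\pm i d_n \end{pmatrix}.$$
The last column splits as $(\overline{{\mathbf a}}_{1n}, a_{nn})^T + (\overline{{\mathbf 0}}, \pm i d_n)^T$. By multilinearity the determinant is the sum of two determinants. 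The first has last column $(\overline{{\mathbf a}}_{1n}, a_{nn})^T$, and by the block computation already carried out in the proof of the previous lemma it is exactly $\det({\mathbf A} \pm i{\mathbf D}|_n^0)$. The second has last column $\pm i d_n {\mathbf e}_n$. Cofactor expansion along that column gives $\pm i d_n$ times the complementary principal minor, which is $\det({\mathbf A}|_{n}\pm i{\mathbf D}|_{n})$. This proves \eqref{expana!}.

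The three particular cases then follow by substitution.

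\emph{Part 2.} Formula \eqref{expana} is \eqref{expana!} itself; the hypothesis $\det({\mathbf A}|_{n} \pm i{\mathbf D}|_{n}) \neq 0$ plays no role. It only records the branch of the algorithm in which it is used.

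\emph{Part 1.} When $a_{nn}\neq 0$, I would replace $\det({\mathbf A} \pm i {\mathbf D}|_n^0)$ by $a_{nn}\det({\mathbf A}|_{a_{nn}} \pm i {\mathbf D}|_n)$ using Identity \eqref{id1}. This gives \eqref{expan}.

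\emph{Part 3.} When $\det({\mathbf A}|_{n} \pm i{\mathbf D}|_{n}) = 0$, the first term of \eqref{expana!} vanishes, which yields \eqref{expanb}. The hypothesis $a_{nn}=0$ is not needed for the equality itself. It only signals that \eqref{id2} is the relevant way to evaluate the remaining term.

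There is no real obstacle here. The only care needed is bookkeeping: the block of ${\mathbf A}\pm i{\mathbf D}|_n^0$ must coincide with that of ${\mathbf A}\pm i{\mathbf D}$ except in the $(n,n)$ entry, and the sign of the $(n,n)$ cofactor is $(-1)^{2n}=+1$. The extra hypotheses in Parts 1–3 are harmless, since each part is a specialization of the unconditional identity \eqref{expana!}.
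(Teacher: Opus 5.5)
Your proof is correct, but it takes a different route from the paper. The paper never splits the last column. It works entirely with the block formulas of Lemma~\ref{1}. First it proves, for $a_{nn}\neq 0$ and $\det({\mathbf A}|_n\pm i{\mathbf D}|_n)\neq 0$, the auxiliary identity $\det({\mathbf A}|_n\pm i{\mathbf D}|_n)\bigl(a_{nn}-\underline{{\mathbf a}}_{n1}({\mathbf A}|_n\pm i{\mathbf D}|_n)^{-1}\overline{{\mathbf a}}_{1n}\bigr)=a_{nn}\det({\mathbf A}|_{a_{nn}}\pm i{\mathbf D}|_n)$, using Lemma~\ref{1}, Part~1 together with \eqref{id1}. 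When the leading block is invertible, it expands $\det({\mathbf A}\pm i{\mathbf D})$ by the Schur complement of that block and peels off the $\pm i d_n$ term. It then identifies the remainder either by the auxiliary identity (if $a_{nn}\neq 0$) or by rewriting the inverse as an adjugate and using \eqref{id2} (if $a_{nn}=0$). When the leading block is singular, it uses Lemma~\ref{1}, Part~3 and \eqref{id2} once more.

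Your multilinearity argument proves \eqref{expana!} in one stroke, with no case split and for arbitrary (even complex) $d_i$. It also shows plainly that the side hypotheses of Parts~1--3 are superfluous, except $a_{nn}\neq 0$ in Part~1, which is needed only for ${\mathbf A}|_{a_{nn}}$ to exist. It further avoids a small weak point in the paper's proof. In the singular-block case the paper appeals to \eqref{id2}, which was stated only for $a_{nn}=0$. So the subcase $a_{nn}\neq 0$ there must be repaired by applying Lemma~\ref{1}, Part~3 directly to ${\mathbf A}\pm i{\mathbf D}|_n^0$.

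What the paper's approach buys is mainly continuity with the Schur-complement formalism it uses for \eqref{expan} and for the later recurrences. As a proof of the lemma as stated, yours is shorter and more general.
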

{\bf Proof.} First, we prove that for $a_{nn} \neq 0$, $\det({\mathbf A}|_{n} \pm i{\mathbf D}|_{n}) \neq 0$, the following equality is true:
\begin{equation}\label{new}\det({\mathbf A}|_{n}\pm i{\mathbf D}|_{n})(a_{nn} - \underline{{\mathbf a}}_{n1}({\mathbf A}|_n \pm i {\mathbf D}|_{n})^{-1}\overline{{\mathbf a}}_{1n})  = a_{nn}\det({\mathbf A}|_{a_{nn}} \pm i {\mathbf D}|_n).\end{equation} Indeed, since $\det({\mathbf A}|_{n} \pm i{\mathbf D}|_{n}) \neq 0$, we apply Lemma \ref{1}, Part 1 to ${\mathbf A} \pm i{\mathbf D}|_n^0$ and obtain
$$\det({\mathbf A}|_{n}\pm i{\mathbf D}|_{n})(a_{nn} - \underline{{\mathbf a}}_{n1}({\mathbf A}|_n \pm i {\mathbf D}|_{n})^{-1}\overline{{\mathbf a}}_{1n}) = \det({\mathbf A} \pm i{\mathbf D}|_n^0). $$
Then applying Identity \eqref{id1} and equating both parts, we obtain exactly \eqref{new}.
 
Now we write a positive diagonal matrix $\mathbf D$ in the form:
$${\mathbf D} = \diag\{{\mathbf D}|_{n}, d_n\}.$$
Representing in block form \eqref{ym2}, we obtain
$${\mathbf A} \pm i{\mathbf D} = \begin{pmatrix} {\mathbf A}|_{n} & \overline{{\mathbf a}}_{1n} \\
\underline{{\mathbf a}}_{n1} & a_{nn} \end{pmatrix} \pm i\begin{pmatrix} {\mathbf D}|_{n} & \overline{{\mathbf 0}} \\
\underline{{\mathbf 0}} & d_n \end{pmatrix} = \begin{pmatrix} {\mathbf A}|_{n}\pm i{\mathbf D}|_{n} & \overline{{\mathbf a}}_{1n} \\
\underline{{\mathbf a}}_{n1} & a_{nn}\pm id_n \end{pmatrix}.$$

If $\det({\mathbf A}|_{n} \pm i{\mathbf D}|_{n}) \neq 0$, we use Lemma \ref{1}, Part 1 and expand $\det({\mathbf A} \pm i{\mathbf D})$ as follows:

$$\det({\mathbf A} \pm i{\mathbf D}) = \det({\mathbf A}|_{n}\pm i{\mathbf D}|_{n})(a_{nn} \pm id_n - \underline{{\mathbf a}}_{n1}({\mathbf A}|_{n}\pm i{\mathbf D}|_{n})^{-1}\overline{{\mathbf a}}_{1n})= $$
$$\pm id_n\det({\mathbf A}|_{n}\pm i{\mathbf D}|_{n}) + \det({\mathbf A}|_{n}\pm i{\mathbf D}|_{n})(a_{nn} - \underline{{\mathbf a}}_{n1}({\mathbf A}|_{n}\pm i{\mathbf D}|_{n})^{-1}\overline{{\mathbf a}}_{1n} )
.$$
If $a_{nn} \neq 0$, we use Equality \eqref{new} and obtain
$$ \det({\mathbf A} \pm i{\mathbf D}) = \pm id_n\det({\mathbf A}|_{n}\pm i{\mathbf D}|_{n}) + a_{nn}\det({\mathbf A}|_{a_{nn}} \pm i {\mathbf D}|_n). $$
Now let $a_{nn} = 0$. Then we get 
$$\det({\mathbf A} \pm i{\mathbf D}) = \pm id_n\det({\mathbf A}|_{n}\pm i{\mathbf D}|_{n}) + \det({\mathbf A}|_{n}\pm i{\mathbf D}|_{n})(- \underline{{\mathbf a}}_{n1}({\mathbf A}|_{n}\pm i{\mathbf D}|_{n})^{-1}\overline{{\mathbf a}}_{1n} ) = $$
$$\pm id_n\det({\mathbf A}|_{n}\pm i{\mathbf D}|_{n}) - \underline{{\mathbf a}}_{n1}{\rm adj}({\mathbf A}|_{n}\pm i{\mathbf D}|_{n})\overline{{\mathbf a}}_{1n}  = \ldots$$
By Identity \eqref{id2},
$$\ldots = \pm id_n\det({\mathbf A}|_{n}\pm i{\mathbf D}|_{n}) + \det({\mathbf A} \pm i{\mathbf D}|_n^0).$$
Finally, if $\det({\mathbf A}|_{n} \pm i{\mathbf D}|_{n}) = 0$, we use Lemma \ref{1}, Part 3 and obtain $$\det({\mathbf A} \pm i{\mathbf D}) = - \underline{{\mathbf a}}_{n1}{\rm adj}({\mathbf A}|_{n}\pm i{\mathbf D}|_{n})\overline{{\mathbf a}}_{1n} = \ldots$$
Again by Identity \eqref{id2},
$$\ldots = \det({\mathbf A} \pm i{\mathbf D}|_n^0).$$
$\square$

\section{Basic results on $D$-stability}
First, recall the following basic results (see, e.g. \cite{JOHN1}).

\begin{lemma}[Elementary properties of $D$-stable matrices]\label{el} If $\mathbf A$ is $D$-stable then $\mathbf A$ is nonsingular and each of the following matrices are also $D$-stable:
\begin{enumerate}
\item[\rm 1.] ${\mathbf A}^T$
\item[\rm 2.] ${\mathbf A}^{-1}$
\item[\rm 3.] ${\mathbf P}^T{\mathbf A}{\mathbf P},$ where $\mathbf P$ is any permutation matrix.
\item[\rm 4.] ${\mathbf D}{\mathbf A}{\mathbf E}$, where $\mathbf D$, $\mathbf E$ are positive diagonal matrices.
\end{enumerate}
\end{lemma}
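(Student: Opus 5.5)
The plan is to work directly from the definition: $\mathbf A$ is $D$-stable if $\sigma(\mathbf{DA})$ lies in the open right half-plane for every positive diagonal $\mathbf D$. Each item then reduces to a similarity or a transposition that keeps the product in the form (positive diagonal)$\times$(matrix).

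Nonsingularity comes first. Take $\mathbf D = \mathbf I$. Then $\mathbf A$ is positive stable, so $0 \notin \sigma(\mathbf A)$, and hence $\det \mathbf A \neq 0$.

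For Item 1, fix a positive diagonal $\mathbf D$. We have $\mathbf{D}\mathbf{A}^T = \mathbf D(\mathbf{A}\mathbf{D})^T\mathbf D^{-1}$, which is similar to $(\mathbf{A}\mathbf{D})^T$. That matrix has the same spectrum as $\mathbf{A}\mathbf{D}$. In turn $\mathbf{A}\mathbf{D} = \mathbf D^{-1}(\mathbf{D}\mathbf{A})\mathbf D$ is similar to $\mathbf{DA}$. Hence $\sigma(\mathbf D\mathbf A^T) = \sigma(\mathbf{DA})$, which lies in the right half-plane.

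For Item 2, note that $\mathbf D\mathbf A^{-1} = (\mathbf A\mathbf D^{-1})^{-1}$, and $\mathbf A\mathbf D^{-1}$ is similar to $\mathbf D^{-1}\mathbf A$. The matrix $\mathbf D^{-1}$ is again positive diagonal, so $\mathbf D^{-1}\mathbf A$ is positive stable. Since $\mathrm{Re}(1/\lambda) = \mathrm{Re}(\lambda)/|\lambda|^2$, inversion preserves the open right half-plane. Therefore $\mathbf D\mathbf A^{-1}$ is positive stable.

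For Item 3, use $\mathbf P^T = \mathbf P^{-1}$. Given a positive diagonal $\mathbf D$, the matrix $\tilde{\mathbf D} := \mathbf P\mathbf D\mathbf P^T$ is again positive diagonal, with its entries permuted. Then
$$\mathbf D\mathbf P^T\mathbf A\mathbf P = \mathbf P^T(\tilde{\mathbf D}\mathbf A)\mathbf P,$$
which is similar to $\tilde{\mathbf D}\mathbf A$, and so its spectrum lies in the right half-plane.

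For Item 4, take an arbitrary positive diagonal $\mathbf D'$. Then
$$\mathbf D'\mathbf D\mathbf A\mathbf E = \mathbf E^{-1}(\mathbf E\mathbf D'\mathbf D\mathbf A)\mathbf E,$$
which is similar to $(\mathbf E\mathbf D'\mathbf D)\mathbf A$. Diagonal matrices commute, so $\mathbf E\mathbf D'\mathbf D$ is positive diagonal, and the claim follows.

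No step is a real obstacle. The only point needing care is in Items 2 and 4, where one must check that each conjugation produces a positive diagonal matrix on the left of $\mathbf A$. This relies on diagonal matrices commuting and on the inverse of a positive diagonal matrix being positive diagonal.
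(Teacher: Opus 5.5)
Your proof is correct. The paper gives no proof of this lemma: it only recalls it from Johnson's work, and your direct transposition/similarity verification is the standard argument for these facts, so there is no route to compare against.

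One displayed identity in Item 1 is wrong as written. Since $(\mathbf A\mathbf D)^T = \mathbf D\mathbf A^T$, your right-hand side $\mathbf D(\mathbf A\mathbf D)^T\mathbf D^{-1}$ equals $\mathbf D^2\mathbf A^T\mathbf D^{-1}$. That coincides with $\mathbf D\mathbf A^T$ only when $\mathbf D$ commutes with $\mathbf A^T$.

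The conclusion is unaffected, and either of two fixes works:
\begin{itemize}
\item Use the exact equality $\mathbf D\mathbf A^T = (\mathbf A\mathbf D)^T$, which needs no conjugation. Your chain $\sigma((\mathbf A\mathbf D)^T) = \sigma(\mathbf A\mathbf D) = \sigma(\mathbf D\mathbf A)$ then finishes the item.
\item Use the identity you probably intended, $\mathbf D\mathbf A^T = \mathbf D(\mathbf D\mathbf A)^T\mathbf D^{-1}$. This gives $\sigma(\mathbf D\mathbf A^T) = \sigma(\mathbf D\mathbf A)$ in one step.
\end{itemize}

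The nonsingularity argument and Items 2--4 are correct as written.
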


Here, we recall the following equivalent conditions of matrix $D$-stability (see \cite{JOHN5}, p. 89, Corollary 2).

\begin{theorem}[Johnson]\label{stabcond} Let ${\mathbf A} \in {\mathcal M}^{n \times n}$ be stable. Then the following conditions are equivalent.
\begin{enumerate}
\item[\rm (i)] ${\mathbf A}$ is $D$-stable.
\item[\rm (ii)] $\det({\mathbf A} \pm i{\mathbf D}) \neq 0$ for every positive diagonal matrix $\mathbf D$.
\end{enumerate}
\end{theorem}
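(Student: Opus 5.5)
The plan is to prove both implications directly. The link between the two conditions is that $\det(\mathbf{DA} - \lambda\mathbf I)=0$ with $\lambda = \pm i t$ purely imaginary can be rewritten, after factoring out $\mathbf D$, as $\det(\mathbf A \mp i t\mathbf D^{-1})=0$. Since $\mathbf D^{-1}$ runs over all positive diagonal matrices as $\mathbf D$ does, and $t>0$ can be absorbed into the diagonal matrix, purely imaginary nonzero eigenvalues of some $\mathbf{DA}$ correspond exactly to zeros of $\det(\mathbf A \pm i\mathbf D')$ for positive diagonal $\mathbf D'$.

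For (i)$\Rightarrow$(ii), argue by contraposition. Suppose $\det(\mathbf A + i\mathbf D)=0$ for some positive diagonal $\mathbf D$ (the case with the minus sign is the complex conjugate). Multiplying on the left by $\mathbf D^{-1}$ gives
$$\det(\mathbf D^{-1}\mathbf A + i\mathbf I)=0.$$
Hence $-i$ is an eigenvalue of $\mathbf D^{-1}\mathbf A$, and since $\mathbf D^{-1}$ is positive diagonal, $\mathbf A$ is not $D$-stable.

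For (ii)$\Rightarrow$(i), use a continuity/homotopy argument on the connected set of positive diagonal matrices. Fix a positive diagonal $\mathbf D$ and consider the path
$$\mathbf D_s = (1-s)\mathbf I + s\mathbf D, \qquad s\in[0,1],$$
which stays in the set of positive diagonal matrices. The eigenvalues of $\mathbf D_s\mathbf A$ depend continuously on $s$, and at $s=0$ they all lie in the open right half-plane because $\mathbf A$ is stable. If some eigenvalue of $\mathbf D\mathbf A$ had nonpositive real part, then by continuity some eigenvalue of some $\mathbf D_s\mathbf A$ would lie on the imaginary axis. We rule out both possibilities for such an eigenvalue:
\begin{enumerate}
\item[-] It cannot be $0$: by (ii) with $\mathbf D\to 0$, or more simply because (ii) forces $\det\mathbf A\neq 0$. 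Indeed, $\det(\mathbf A + i\varepsilon\mathbf I)\neq 0$ for all $\varepsilon>0$, and stability of $\mathbf A$ already gives $\det \mathbf A\neq 0$. Thus $\det(\mathbf D_s\mathbf A)\neq 0$.
\item[-] It cannot be $\pm it$ with $t>0$: in that case
$$\det(\mathbf A \mp i t\mathbf D_s^{-1})=0,$$
and $t\mathbf D_s^{-1}$ is positive diagonal, contradicting (ii).
\end{enumerate}
So no eigenvalue crosses the imaginary axis along the path, and $\mathbf{DA}$ is stable.

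The main obstacle is making the crossing argument rigorous, namely the continuity of the roots of the characteristic polynomial. I would invoke the standard fact that the roots of a monic polynomial depend continuously on its coefficients, for instance via Hurwitz's theorem or Rouché's theorem. Applied here, it shows that the number of eigenvalues in the open right half-plane is constant along $s$ as long as no eigenvalue lies on the imaginary axis. This yields the result.
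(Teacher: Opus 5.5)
The paper gives no proof of this theorem; it quotes it from Johnson's work and uses it as a black box. So there is nothing in the paper to compare your argument against. Your proof is correct, and it is the standard continuity proof of the criterion:
\begin{itemize}
\item (i)$\Rightarrow$(ii) follows from $\det(\mathbf D^{-1}\mathbf A + i\mathbf I)=\det(\mathbf D^{-1})\det(\mathbf A + i\mathbf D)$, which makes $-i$ an eigenvalue of $\mathbf D^{-1}\mathbf A$.
\item (ii)$\Rightarrow$(i) follows by tracking $\sigma(\mathbf D_s\mathbf A)$ along a path of positive diagonal matrices. No eigenvalue can reach $\pm it$ with $t>0$, since $t\mathbf D_s^{-1}$ is again positive diagonal.
\end{itemize}

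There is one slip in your zero-eigenvalue bullet. Condition (ii) by itself does not force $\det\mathbf A\neq 0$: for $n=1$, $\mathbf A=0$ satisfies (ii). Likewise, letting $\mathbf D\to 0$ in nonvanishing determinants proves nothing. The valid reason is the one you also state: stability of $\mathbf A$ gives $\det\mathbf A\neq 0$, hence $\det(\mathbf D_s\mathbf A)=\det(\mathbf D_s)\det\mathbf A\neq 0$. You should drop the other two justifications.

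For the crossing step, you can avoid Rouch\'e on an unbounded region by an open--closed argument on $[0,1]$. Let $S$ be the set of $s$ for which $\sigma(\mathbf D_s\mathbf A)$ lies in the open right half-plane.
\begin{itemize}
\item $S$ is open, by continuity of the roots of the characteristic polynomial.
\item $S$ is closed: limits of its eigenvalues lie in the closed half-plane, and by your two bullets none can lie on the imaginary axis. The eigenvalues stay bounded because $\|\mathbf D_s\mathbf A\|\le \max(1,\max_i d_i)\,\|\mathbf A\|$.
\end{itemize}
Since $0\in S$ and $[0,1]$ is connected, $S=[0,1]$.
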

The condition (ii) shows that ${\mathbf D}{\mathbf A}$ has no eigenvalues on the imaginary axis for any positive diagonal matrix $\mathbf D$.

Let us recall the following definitions from matrix theory.

{\bf Definition 5.} A matrix ${\mathbf A} \in {\mathcal M}^{n \times n}$ is called:
\begin{enumerate}
\item[-] a {\it $P$-matrix ($P_0$-matrix)} if all its principal minors are positive (respectively, nonnegative), i.e the inequality $A \left(\begin{array}{ccc}i_1 & \ldots & i_k \\ i_1 & \ldots & i_k \end{array}\right) > 0$ (respectively, $\geq 0$)
holds for all $(i_1, \ \ldots, \ i_k), \ 1 \leq i_1 < \ldots < i_k \leq n$, and all $k, \ 1 \leq k \leq n$.
\item[-] a {\it $P_0^+$-matrix} if it is a $P_0$-matrix and, in addition, the inequality
$$\sum_{(i_1, \ldots, i_k)}A \begin{pmatrix}i_1 & \ldots & i_k \\ i_1 & \ldots & i_k \end{pmatrix} > 0 $$
holds for all $k, \ 1 \leq k \leq n$, i.e. the sums of all principal minors of every fixed order $k$ are positive.
\end{enumerate}

A important necessary for $D$-stability condition was proved in \cite{QR}: {\it if $\mathbf A$ is $D$-stable then $-{\mathbf A}$ is a $P_0^+$-matrix. Consequently, if $\mathbf A$ is positive $D$-stable then ${\mathbf A}$ is a $P_0^+$-matrix.}

\section{Expanding $\det({\mathbf A} + i{\mathbf D})$: Delete/Zero algorithm}
For the remainder of the paper, we work with the $+i{\mathbf D}$ case; the $-i{\mathbf D}$ case is analogous and yields the same conclusions up to sign changes that do not affect the zero sets.
Following Johnson (\cite{JOHN5}), we introduce the following notations. Given an $n \times n$ matrix $\mathbf A$, let
\begin{equation}\label{PQ}P= P({\mathbf A}):= {\rm Re}(\det({\mathbf A}+ i{\mathbf D})); \qquad Q = Q({\mathbf A})= {\rm Im}(\det({\mathbf A}+ i{\mathbf D})).\end{equation}
Both $P$ and $Q$ are the polynomials of $n$ variables $d_1, \ \ldots, \ d_n$, with real coefficients. Hence,
\begin{equation}\label{st1} \det({\mathbf A}+ i{\mathbf D}) = P + iQ. \end{equation}

Now we start a branched process, described as follows. 
\begin{enumerate}
\item[\rm {\bf Step 0.}] Consider an $n \times n$ matrix ${\mathbf A}+ i{\mathbf D}$ and the polynomials of $n$ variables $P$ and $Q$, defined by \eqref{PQ}.
\item[\rm {\bf Step 1.}] We produce one of the following operations:
\begin{enumerate}
\item[\rm I.] Delete the last row and last column of ${\mathbf A}+ i{\mathbf D}$. The result will be an $(n-1)\times(n-1)$ submatrix ${\mathbf A}_0:={\mathbf A}|_n+ i{\mathbf D}|_n$ and the corresponding polynomials of $n-1$ variables, denoted
$$P_0:= {\rm Re}(\det({\mathbf A}|_n+ i{\mathbf D}|_n)); \qquad Q_0:= {\rm Im}(\det({\mathbf A}|_n+ i{\mathbf D}|_n)). $$
\item[\rm II.] Keep the last row and last column of ${\mathbf A}$, but set $d_n$ to zero. The result will be an $n\times n$ matrix ${\mathbf A}_{1}:={\mathbf A}+ i{\mathbf D}|_n^0$, and the corresponding polynomials of $n-1$ variables, denoted
$$P_1:= {\rm Re}(\det({\mathbf A}+ i{\mathbf D}|_n^0)); \qquad Q_1:= {\rm Im}(\det({\mathbf A}+ i{\mathbf D}|_n^0)). $$
\end{enumerate}
\item[\rm {\bf Step k+1.}] After $k$ steps, we have $2^k$ matrices ${\mathbf A}_{s_{n-k+1}\ldots s_n}$, each described by a binary vector $s = (s_{n-k+1}\ldots s_n)$ of $k$ variables $s_i \in \{0,1\}$, $i = n-k+1, \ \ldots, \ n$. Here $s_i = 0$ means the row and column with number $i$ is completely deleted from ${\mathbf A}+ i{\mathbf D}$, and $s_i = 1$ means the $i$th row and column are kept but $d_i$ is replaced with $0$.
Hence the size of ${\mathbf A}_{s_{n-k+1}\ldots s_n}$ is $n - k + \sum_{i = n-k+1}^ns_i$, and the corresponding polynomials $P_{s_{n-k+1}\ldots s_n}:= {\rm Re}(\det({\mathbf A}_{s_{n-k+1}\ldots s_n}))$ and $ Q_{s_{n-k+1}\ldots s_n}:= {\rm Im}(\det({\mathbf A}_{s_{n-k+1}\ldots s_n}))$ each depends on $n-k$ variables $d_1, \ \ldots d_{n-k}$.
We again produce one of the following operations:
\begin{enumerate}
\item[\rm I.] Delete the row and the column with index $n-k$ of ${\mathbf A}_{s_{n-k+1}\ldots s_n}$. The result will be the submatrix ${\mathbf A}_{0s_{n-k+1}\ldots s_n}$ with the size reduced by $1$, and the corresponding polynomials of $n-k-1$ variables, denoted
$$P_{0s_{n-k+1}\ldots s_n}:= {\rm Re}(\det({\mathbf A}_{0s_{n-k+1}\ldots s_n}));$$  $$Q_{0s_{n-k+1}\ldots s_n}:= {\rm Im}(\det({\mathbf A}_{0s_{n-k+1}\ldots s_n})). $$
\item[\rm II.] Keep the row and column with index $n-k$ of ${\mathbf A}_{s_{n-k+1}\ldots s_n}$, but set $d_{n-k}$ to zero. The result will be the matrix ${\mathbf A}_{1s_{n-k+1}\ldots s_n}$ of the same size, and the corresponding polynomials of $n-k-1$ variables, denoted

$$P_{1s_{n-k+1}\ldots s_n}:= {\rm Re}(\det({\mathbf A}_{1s_{n-k+1}\ldots s_n}));$$ $$Q_{1s_{n-k+1}\ldots s_n}:= {\rm Im}(\det({\mathbf A}_{1s_{n-k+1}\ldots s_n})). $$
\end{enumerate}
\item[\rm {\bf Step $n-1$.}] The algorithm halts at step $n-1$.
\end{enumerate}
In general, the algorithm performs $n$ steps in total, considering the indices from $n$ to $1$. However, after $n-1$ steps, we obtain the following result.
\begin{lemma}[Correctness of Delete/Zero algorithm]\label{p} Let ${\mathbf A} \in {\mathcal M}^{n \times n}$ and let the recursive algorithm makes $n-1$ steps, processing the indices $n, \ \ldots, \ 2$. 
For each binary vector $s = (s_2, \ \ldots, \ s_n)$, $s_i \in \{0,1\}$, define the set of indices $\alpha(s)$ as follows:
$$\alpha(s):=\{i: 2 \leq i \leq n, \ s_i = 1 \}.$$
Then the matrix ${\mathbf A}_{s}$ satisfies:
$$\det({\mathbf A}_{s}) = A\begin{pmatrix}1 \cup \alpha(s) \\  1 \cup \alpha(s) \end{pmatrix} + id_1A\begin{pmatrix} \alpha(s) \\  \alpha(s) \end{pmatrix}. $$
\end{lemma}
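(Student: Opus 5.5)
The plan is to identify ${\mathbf A}_s$ explicitly as a matrix and then compute its determinant directly. I will prove by induction on the number of steps $k$ that after processing indices $n, \ldots, n-k+1$ with choice vector $(s_{n-k+1},\ldots,s_n)$, the matrix ${\mathbf A}_{s_{n-k+1}\ldots s_n}$ is the principal submatrix of ${\mathbf A}+i{\mathbf D}$ on the index set
$$\beta = [n-k]\cup\{i:\ n-k+1\le i\le n,\ s_i=1\},$$
with the diagonal entries $d_i$ set to $0$ for every kept index $i > n-k$. Equivalently,
$${\mathbf A}_{s} = \bigl({\mathbf A} + i\,\diag\{d_1,\ldots,d_{n-k},0,\ldots,0\}\bigr)[\beta].$$
For $k=1$ this is the definition of ${\mathbf A}_0={\mathbf A}|_n+i{\mathbf D}|_n$ and ${\mathbf A}_1={\mathbf A}+i{\mathbf D}|_n^0$. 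For the inductive step, operation I deletes row and column $n-k$, which removes $n-k$ from $\beta$. Operation II keeps index $n-k$ in $\beta$ but replaces $d_{n-k}$ by $0$. In both cases the description above is preserved with $k$ replaced by $k+1$.

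Take $k=n-1$. Then ${\mathbf A}_s$ is the principal submatrix of ${\mathbf A}$ indexed by $\gamma = \{1\}\cup\alpha(s)$, with $i d_1$ added to the $(1,1)$ entry only, since all other $d_i$ are zero. Call this matrix ${\mathbf A}[\gamma]+id_1E_{11}$. Its determinant is linear in the entries of the first row. Splitting the first row as $(a_{1j})_{j\in\gamma} + (id_1,0,\ldots,0)$ gives
$$\det\bigl({\mathbf A}[\gamma]+id_1E_{11}\bigr)=\det {\mathbf A}[\gamma] + id_1\det {\mathbf A}[\alpha(s)].$$
The second term comes from the Laplace expansion along the first row of the matrix whose first row is $(id_1,0,\ldots,0)$; its cofactor is the principal minor on $\gamma\setminus\{1\}=\alpha(s)$. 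This gives exactly $A\begin{pmatrix}1\cup\alpha(s)\\1\cup\alpha(s)\end{pmatrix}+id_1A\begin{pmatrix}\alpha(s)\\\alpha(s)\end{pmatrix}$.

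Equivalently, the same identity follows from Lemma \ref{explemma}, formula \eqref{expana!}, applied to ${\mathbf A}[\gamma]$ with index $1$ playing the role of the last index after a permutation similarity; such a similarity does not change determinants or principal minors. The convention $A\begin{pmatrix}\emptyset\\\emptyset\end{pmatrix}=1$ handles the case $\alpha(s)=\emptyset$, where ${\mathbf A}_s$ is the $1\times1$ matrix $(a_{11}+id_1)$.

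The main obstacle is only bookkeeping. The inductive description of ${\mathbf A}_s$ has to track two things carefully: which indices survive, and which diagonal parameters have been zeroed. The paper indexes rows by their original labels even after deletions, so "index $n-k$" should be read as the original label. I would state this convention explicitly at the start of the induction.
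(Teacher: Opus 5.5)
Your proof is correct and is essentially what the paper intends. The paper gives no separate proof: it treats the description of ${\mathbf A}_s$ (row and column $i$ deleted if $s_i=0$, kept with $d_i:=0$ if $s_i=1$) as built into the algorithm, and it remarks that one more delete/zero step at index $1$ splits $\det({\mathbf A}_s)$ into $id_1$ times the minor on $\alpha(s)$ plus the minor on $1\cup\alpha(s)$. That is your induction followed by the first-row expansion, or your alternative via Lemma~\ref{explemma}.

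Your indexing caveat is harmless but unnecessary. Before step $k+1$ only indices larger than $n-k$ have been deleted, so original label $n-k$ still sits in position $n-k$.
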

After $n-1$ steps, we have $2^{n-1}$ matrices, each still depending on the variable $d_1$. If the algorithm proceeds one more step (Step $n$), then it processes index $1$, generating $2^n$ matrices, all with $Q_{s_1\ldots s_n} \equiv 0$, and  $P_{s_1\ldots s_n}$ be real constants, that are exactly the principal minors of 
$\mathbf A$, including the empty minor ${\mathbf A}(\emptyset):=1$.

Studying the principal minors through determinantal representations of an associated multivariate polynomial is a known technique, see \cite{AHV1}, \cite{AHV2}, where the identity 
$$\det({\mathbf A} + {\mathbf D}) = \sum_{\alpha \subseteq [n]}A\begin{pmatrix}
    \alpha \\ \alpha
\end{pmatrix}\prod_{i \in [n]\setminus \alpha}d_i$$
is given explicitly. We also note that the problem of computing all principal minors of a matrix has been studied extensively, for example, Griffin and Tsatsomeros \cite{GT1} provide an $O(2^n)$ algorithm. However, our goal is different: we seek to analyze the parameterized determinant $\det({\mathbf A} + i{\mathbf D})$ for the purpose of testing $D$-stability. The branched structure we introduce is specifically designed to track the dependence on the diagonal variables $d_i$ and to enable hierarchical testing. 

{\bf Example.}
Applying $2$ steps of Delete/Zero algorithm to a $4 \times 4$ matrix, we obtain: 
$$\det({\mathbf A} + i{\mathbf D}) = \begin{vmatrix}a_{11} + id_1 & a_{12} & a_{13} & a_{14} \\ a_{21} & a_{22} + id_{2} & a_{23} & a_{24} \\ a_{31} & a_{32} & a_{33} + id_{3} & a_{34} \\ a_{41} & a_{42} & a_{43} & a_{44} + id_{4} \end{vmatrix}$$

{\bf Step 1.} $$\det({\mathbf A}_0) = \begin{vmatrix}a_{11} + id_1 & a_{12} & a_{13}  \\ a_{21} & a_{22} + id_{2} & a_{23} \\ a_{31} & a_{32} & a_{33} + id_{3}  \end{vmatrix}; $$ $$\det({\mathbf A}_1) = \begin{vmatrix}a_{11} + id_1 & a_{12} & a_{13} & a_{14} \\ a_{21} & a_{22} + id_{2} & a_{23} & a_{24} \\ a_{31} & a_{32} & a_{33} + id_{3} & a_{34} \\ a_{41} & a_{42} & a_{43} & a_{44} \end{vmatrix}$$ 

{\bf Step 2.} $$\det({\mathbf A}_{00}) = \begin{vmatrix}a_{11} + id_1 & a_{12} &  \\ a_{21} & a_{22} + id_{2}  \end{vmatrix}; \qquad \det({\mathbf A}_{10}) = \begin{vmatrix}a_{11} + id_1 & a_{12} & a_{13}  \\ a_{21} & a_{22} + id_{2} & a_{23} \\ a_{31} & a_{32} & a_{33} \end{vmatrix};$$
$$\det({\mathbf A}_{01}) = \begin{vmatrix}a_{11} + id_1 & a_{12} &  a_{14} \\ a_{21} & a_{22} + id_{2} & a_{24} \\ a_{41} & a_{42} &  a_{44} \end{vmatrix}; $$ $$ \det({\mathbf A}_{11}) = \begin{vmatrix}a_{11} + id_1 & a_{12} & a_{13} & a_{14} \\ a_{21} & a_{22} + id_{2} & a_{23} & a_{24} \\ a_{31} & a_{32} & a_{33}  & a_{34} \\ a_{41} & a_{42} & a_{43} & a_{44} \end{vmatrix}.$$ 
Calculating the determinants and decomposing into real and imaginary parts, we obtain the following eight polynomials, collected according to the powers of $d_2$:
$$P_{00} = a_{11}d_2 + d_1a_{22}; \qquad Q_{00} = - d_1d_2 + A\begin{pmatrix}
    1 & 2 \\ 1 & 2
\end{pmatrix}; $$
$$P_{10} = A\begin{pmatrix}
    1 & 3 \\ 1 & 3
\end{pmatrix}d_2 + A\begin{pmatrix}
    2 & 3 \\ 2 & 3
\end{pmatrix}d_1; \qquad Q_{10} = -a_{33}d_1d_2 + A\begin{pmatrix}
    1 & 2 & 3 \\ 1 & 2 & 3
\end{pmatrix}. $$
$$P_{01} = A\begin{pmatrix}
    1 & 4 \\ 1 & 4
\end{pmatrix}d_2 + A\begin{pmatrix}
    2 & 4 \\ 2 & 4
\end{pmatrix}d_1; \qquad Q_{01} = -a_{44}d_1d_2 + A\begin{pmatrix}
    1 & 2 & 4 \\ 1 & 2 & 4
\end{pmatrix}. $$
$$P_{11} = A\begin{pmatrix}
    1 & 3 & 4 \\ 1 & 3 &  4
\end{pmatrix}d_2 + A\begin{pmatrix}
    2 & 3 & 4 \\ 2 & 3 & 4
\end{pmatrix}d_1; \qquad Q_{11} = -A\begin{pmatrix}
    3 & 4 \\ 3 & 4
\end{pmatrix}d_1d_2  + \det({\mathbf A}). $$

\begin{lemma}\label{explemma1} Given an arbitrary matrix ${\mathbf A} \in {\mathcal M}^{n \times n}$ and a diagonal matrix ${\mathbf D} \in {\mathcal M}^{n \times n}$, the following recurrence relations hold:
\begin{equation}\label{expsyst1}\begin{cases}P = -d_nQ_0 + P_1; \\
Q = d_nP_0 + Q_1
\end{cases}\end{equation}
for every set of positive variables $(d_1, \ \ldots, \ d_n)$.
In particular, if $a_{nn} \neq 0$ and $\det({\mathbf A}|_{n} + i{\mathbf D}|_{n}) \neq 0$ then
\begin{equation}\label{expsyst}\begin{cases}P = -d_nQ_0 + a_{nn}P({\mathbf A}|_{a_{nn}}); \\
Q = d_nP_0 + a_{nn}Q({\mathbf A}|_{a_{nn}}),
\end{cases}\end{equation}
where $P({\mathbf A}|_{a_{nn}}) = {\rm Re}(\det({\mathbf A}|_{a_{nn}}+ i{\mathbf D}|_{n}))$ and $Q({\mathbf A}|_{a_{nn}}) = {\rm Im}(\det({\mathbf A}|_{a_{nn}}+ i{\mathbf D}|_{n}))$
\end{lemma}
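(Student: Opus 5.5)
The plan is to take the $+$ sign in Lemma \ref{explemma}, Identity \eqref{expana!}, and split both sides into real and imaginary parts. That identity holds for every real diagonal $\mathbf D$ with no nondegeneracy assumption. With the notation of Step 1 of the Delete/Zero algorithm it reads
$$P + iQ = \det({\mathbf A} + i{\mathbf D}) = i d_n \det({\mathbf A}_0) + \det({\mathbf A}_1) = i d_n (P_0 + iQ_0) + (P_1 + iQ_1).$$

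All of $P, Q, P_0, Q_0, P_1, Q_1$ are real, being real polynomials in the real variables $d_1, \ldots, d_n$. Also $d_n$ is real. Hence $i d_n(P_0 + iQ_0) = -d_nQ_0 + i d_nP_0$. Comparing real parts gives $P = -d_nQ_0 + P_1$, and comparing imaginary parts gives $Q = d_nP_0 + Q_1$. This is exactly System \eqref{expsyst1}. Positivity of the $d_i$ plays no role here; the relation holds for all real $d_i$.

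For the special case $a_{nn}\neq 0$ and $\det({\mathbf A}|_n + i{\mathbf D}|_n)\neq 0$, I will use Identity \eqref{id1} (or equivalently Part 1 of Lemma \ref{explemma}). It gives $\det({\mathbf A}_1) = \det({\mathbf A}+i{\mathbf D}|_n^0) = a_{nn}\det({\mathbf A}|_{a_{nn}} + i{\mathbf D}|_n)$.

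The matrix ${\mathbf A}|_{a_{nn}}$ is real, since it is a Schur complement of real data. Together with $a_{nn}\in\mathbb R$, this gives $P_1 = a_{nn}P({\mathbf A}|_{a_{nn}})$ and $Q_1 = a_{nn}Q({\mathbf A}|_{a_{nn}})$. Substituting these into \eqref{expsyst1} yields \eqref{expsyst}.

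There is no real obstacle in this argument. The only point needing care is that the separation into real and imaginary parts is legitimate, which requires $d_n$, $a_{nn}$, and the entries of ${\mathbf A}|_{a_{nn}}$ to be real. All three hold because $\mathbf A\in\mathcal M^{n\times n}$ is real and $\mathbf D$ is a real diagonal matrix.
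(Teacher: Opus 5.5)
Your proposal is correct and follows essentially the paper's own route. You substitute $\det(\mathbf A|_n+i\mathbf D|_n)=P_0+iQ_0$ and $\det(\mathbf A+i\mathbf D|_n^0)=P_1+iQ_1$ into Identity \eqref{expana!} and equate real and imaginary parts. In the special case you replace $P_1+iQ_1$ by $a_{nn}\det(\mathbf A|_{a_{nn}}+i\mathbf D|_n)$.

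The only small difference is that you invoke \eqref{id1} directly rather than \eqref{expan}. This shows that only $a_{nn}\neq 0$ is needed for \eqref{expsyst}; the hypothesis $\det(\mathbf A|_n+i\mathbf D|_n)\neq 0$ is superfluous.
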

{\bf Proof.} Using the above notations, we get:
$$\det({\mathbf A}|_{n}+ i{\mathbf D}|_{n}) = P_0 + iQ_0; \qquad  \det({\mathbf A}+ i{\mathbf D}|_n^0) = P_1 + iQ_1.$$
Substituting the above expressions into Equality \eqref{expana!}, we get:
$$P + iQ = id_n(P_0 + iQ_0) + P_1 + iQ_1 = P_1 - d_nQ_0 +i(d_nP_0 + Q_1). $$
Equating real and imaginary parts of both the sides, we complete the proof.

For the special case, when $a_{nn} \neq 0$ and $\det({\mathbf A}|_{n} + i{\mathbf D}|_{n}) \neq 0$, we have
$$\det({\mathbf A}|_{a_{nn}}+ i{\mathbf D}|_{n}) = P({\mathbf A}|_{a_{nn}}) + iQ({\mathbf A}|_{a_{nn}}).$$
Substituting the above expressions into Equality \eqref{expan}, we get:
$$P({\mathbf A}) + iQ({\mathbf A}) = id_n(P({\mathbf A}|_{n}) + iQ({\mathbf A}|_{n})) + a_{nn}(P({\mathbf A}|_{a_{nn}}) + iQ({\mathbf A}|_{a_{nn}})) = $$ $$a_{nn}P({\mathbf A}|_{a_{nn}}) - d_nQ({\mathbf A}|_{n}) +i(d_nP({\mathbf A}|_{n})) + a_{nn}Q({\mathbf A}|_{a_{nn}}). $$
Again, equating real and imaginary parts of both the sides, we complete the proof.
$\square$

Now consider the matrix obtained after $k$ steps of the delete/zero algorithm. Put $s_{n+1} := \emptyset$. For a given binary string ${s} = \{s_{n-k+1}, \ \ldots, \ s_n\}$ of length $k$, use the notation $0s$ for the binary string $0s_{n-k+1} \ldots s_n$ of length $k+1$, and, respectively, $1s$ for the string $1s_{n-k+1} \ldots s_n$.
\begin{lemma}[Recurrence relations for $P_{s}$ and $Q_{s}$]\label{rec} Given an arbitrary matrix ${\mathbf A} \in {\mathcal M}^{n \times n}$ and a diagonal matrix ${\mathbf D} \in {\mathcal M}^{n \times n}$. For a binary string ${s} = \{s_{n-k+1}, \ \ldots, \ s_n\}$, $s_i \in \{0,1\}$, $0 \leq k \leq n$, let ${\mathbf A}_{s}$ be one of the matrices, obtained from $\mathbf A$ after $k$ steps of delete/zero algorithm. Then the following recurrence relations hold:
\begin{equation}\label{expsyst1}\begin{cases}P_{s} = -d_{n-k}Q_{0s} + P_{1s}; \\
Q_{s} = d_{n-k}P_{0s} + Q_{1s}
\end{cases}\end{equation}
for every set of positive variables $(d_1, \ \ldots, \ d_{n-k})$.
\end{lemma}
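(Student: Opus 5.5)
The plan is to reduce Lemma \ref{rec} to Lemma \ref{explemma1}, applied not to $\mathbf A$ but to the matrix ${\mathbf A}_{s}$ produced after $k$ steps. The key observation is structural. By construction, ${\mathbf A}_{s}$ is a square matrix of the form ${\mathbf B}+i{\mathbf E}$. Here $\mathbf B$ is the principal submatrix of $\mathbf A$ on the index set $\{1,\ldots,n-k\}\cup\{i: n-k<i\le n,\ s_i=1\}$. The matrix $\mathbf E$ is diagonal, with entries $d_1,\ldots,d_{n-k}$ at the positions of indices $1,\ldots,n-k$ and zeros at the positions of the kept indices $i>n-k$. I would first check this by induction on $k$: a delete operation passes to a principal submatrix, and a zero operation replaces one $d_j$ by $0$, so the form is preserved.

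Next, I would permute rows and columns simultaneously, by a permutation matrix ${\mathbf P}$, so that index $n-k$ occupies the last position. The matrix ${\mathbf P}^T({\mathbf B}+i{\mathbf E}){\mathbf P}={\mathbf P}^T{\mathbf B}{\mathbf P}+i{\mathbf P}^T{\mathbf E}{\mathbf P}$ has the same determinant. It is again of the form real matrix plus $i$ times a real diagonal matrix, and its last diagonal parameter is $d_{n-k}$. Lemma \ref{explemma} (and hence \eqref{expana!}) was stated for an arbitrary real $\mathbf A$ and an arbitrary real diagonal $\mathbf D$, with no positivity needed for the identity itself. Some diagonal entries of ${\mathbf P}^T{\mathbf E}{\mathbf P}$ being zero is therefore harmless. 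Applying \eqref{expana!} to it gives
$$\det({\mathbf A}_s)=i d_{n-k}\det(\widetilde{\mathbf A}_s)+\det(\widehat{\mathbf A}_s),$$
where $\widetilde{\mathbf A}_s$ deletes the row and column of index $n-k$, and $\widehat{\mathbf A}_s$ sets $d_{n-k}=0$. Undoing the permutation, these are exactly ${\mathbf A}_{0s}$ and ${\mathbf A}_{1s}$, so their determinants equal $P_{0s}+iQ_{0s}$ and $P_{1s}+iQ_{1s}$.

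Finally, I would substitute $\det({\mathbf A}_s)=P_s+iQ_s$ and separate real and imaginary parts exactly as in the proof of Lemma \ref{explemma1}. Since all $d_j$ are real and $P_{\cdot},Q_{\cdot}$ are real polynomials, this yields $P_s=-d_{n-k}Q_{0s}+P_{1s}$ and $Q_s=d_{n-k}P_{0s}+Q_{1s}$. Nothing here is analytic, so the identities hold for all real, in particular all positive, values of $d_1,\ldots,d_{n-k}$.

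The main obstacle is bookkeeping rather than mathematics. One must verify that after the permutation the "delete" and "zero" operations of \eqref{expana!} correspond exactly to the operations I and II defining ${\mathbf A}_{0s}$ and ${\mathbf A}_{1s}$. One must also check that the index $n-k$ is not necessarily last in ${\mathbf A}_s$, since kept indices $i>n-k$ follow it. If one prefers to avoid permutations, \eqref{expana!} can instead be proved directly for an arbitrary diagonal position by linearity of the determinant in that row. That is the fallback I would use.
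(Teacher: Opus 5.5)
Your proposal is correct and follows the paper's route. The paper's entire proof is the remark that the lemma "obviously follows from Lemma \ref{explemma1}" applied to ${\mathbf A}_s$. Your description of ${\mathbf A}_s$ as a principal submatrix of $\mathbf A$ plus $i$ times a real diagonal matrix with some zero entries, together with the permutation placing index $n-k$ last (or linearity of the determinant in that row), supplies exactly the bookkeeping the paper leaves implicit.
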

The proof obviously follows from Lemma \ref{explemma1}.

\section{Analyzing Step 1}
Let us recall the following statement by Johnson (see Theorem 2 and Corollary 3 in \cite{JOHN5}).
\begin{theorem}\label{Johhn} Let ${\mathbf A} \in {\mathcal M}^{n \times n}$ be stable. Then the following conditions are equivalent.
\begin{enumerate}
\item[\rm (i)] ${\mathbf A}$ is $D$-stable.
\item[\rm (ii)] The system of polynomial equations \begin{equation}\label{sy}
\begin{cases}P = 0; \\
Q = 0 \end{cases}
\end{equation}
has no positive solutions $(d_1, \ \ldots, \ d_n)$, $d_i > 0$, $i = 1, \ \ldots, \ n$.
\item[\rm (iii)] The polynomial equation 
$$F(d_1, \ \ldots, \ d_n) = 0,$$
where
\begin{equation}\label{ones}
F:=P^2 + Q^2,
\end{equation}
has no positive solutions $(d_1, \ \ldots, \ d_n)$, $d_i > 0$, $i = 1, \ \ldots, \ n$ (or, equivalently, $F > 0$ whenever $d_i > 0$, $i = 1, \ \ldots, \ n$).
\end{enumerate}
\end{theorem}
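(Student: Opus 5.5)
The statement is essentially a repackaging of Theorem~\ref{stabcond}, so the plan is to prove the equivalences (i)$\Leftrightarrow$(ii)$\Leftrightarrow$(iii) by a chain of elementary translations, using Theorem~\ref{stabcond} as the only nontrivial input.

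\emph{Step 1: (i)$\Leftrightarrow$ non-vanishing of $\det({\mathbf A}+i{\mathbf D})$.} Since $\mathbf A$ is stable, Theorem~\ref{stabcond} says that (i) holds iff $\det({\mathbf A}\pm i{\mathbf D})\neq 0$ for every positive diagonal $\mathbf D$. The theorem only uses the $+$ sign, so we must show the $-$ sign adds nothing. Because $\mathbf A$ and $\mathbf D$ are real,
$$\det({\mathbf A}-i{\mathbf D})=\overline{\det({\mathbf A}+i{\mathbf D})}.$$
Hence the two sign cases vanish simultaneously. This is also the justification of the remark at the start of Section~3 that the $-i{\mathbf D}$ case is analogous. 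Consequently (i) holds iff $\det({\mathbf A}+i{\mathbf D})\neq 0$ for all $d_1,\ldots,d_n>0$.

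\emph{Step 2: (i)$\Leftrightarrow$(ii).} By \eqref{st1}, $\det({\mathbf A}+i{\mathbf D})=P+iQ$, where $P$ and $Q$ are real polynomials in the real variables $d_1,\ldots,d_n$. A complex number vanishes iff its real and imaginary parts both vanish. So $\det({\mathbf A}+i{\mathbf D})=0$ at some positive point iff the system \eqref{sy} has a positive solution there. Taking negations gives (i)$\Leftrightarrow$(ii).

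\emph{Step 3: (ii)$\Leftrightarrow$(iii).} For real values of $P$ and $Q$ we have $F=P^2+Q^2=|\det({\mathbf A}+i{\mathbf D})|^2\geq 0$, with equality iff $P=Q=0$. Thus the positive zero sets of $F$ and of the system \eqref{sy} coincide. Since $F\geq 0$ everywhere, "no positive zero" is the same as "$F>0$ on the open positive orthant," which gives the parenthetical reformulation in (iii).

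The only point needing any care is Step 1. One must check that Johnson's condition with $\pm$ reduces to the single $+$ case, and that the stability hypothesis is exactly what Theorem~\ref{stabcond} requires. Stability is what excludes the trivial obstruction (eigenvalues of ${\mathbf D}{\mathbf A}$ entering the left half-plane would have to cross the imaginary axis, by continuity in $\mathbf D$ and connectedness of the positive diagonal matrices). That argument is already contained in the cited result and need not be redone. Everything else is bookkeeping with real and imaginary parts.
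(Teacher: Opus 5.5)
Your proof is correct. The identity $\det(\mathbf A - i\mathbf D)=\overline{\det(\mathbf A + i\mathbf D)}$ removes the $\pm$ in Theorem~\ref{stabcond}. For real $d_i$, $P+iQ=0$ holds iff $P=Q=0$, which holds iff $F=|\det(\mathbf A+i\mathbf D)|^2=0$.

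The paper gives no proof of its own; it cites Johnson (Theorem 2 and Corollary 3 in his paper). Your reduction to Theorem~\ref{stabcond} is therefore the same approach, with the elementary bookkeeping written out.
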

For a given binary string ${s} = (s_{n-k+1}\ldots s_n)$, $s_i \in \{0,1\}$, $0 \leq k \leq n$, we introduce the notations:
$$F_{s} : = P^2_{s} + Q^2_{s}.$$

Using Relations \ref{expsyst1}, we get the following recursive formula:
$$F = (-d_nQ_0 + P_1)^2 +
(d_nP_0 + Q_1)^2 = $$
$$d_n^2(Q^2_0 + P^2_0) + 2d_n(P_0Q_1 - Q_0P_1) + Q^2_1 + P^2_1$$
and, finally
\begin{equation}\label{Jrec}
    F = d_n^2F_0 + 2d_n(P_0Q_1 - Q_0P_1) + F_1,
\end{equation}

Now introduce the following notations. Given two binary strings ${s}$ and ${t}$ of the same length, define
$$F(s, {t}):={\rm Re}(\overline{\det({\mathbf A}_{s})}\det({\mathbf A}_{t})); \qquad  G(s, {t}) :={\rm Im}(\overline{\det({\mathbf A}_{s})}\det({\mathbf A}_{t})). $$
Easy calculations show that
$$ \overline{\det({\mathbf A}_{s})}\det({\mathbf A}_{t}) = (P_{s}- iQ_{s})(P_{t} + iQ_{t}) =$$
$$(P_{s}P_{t} + Q_{s}Q_{t}) + i(P_{s}Q_{t} - Q_{s}P_{t}).$$
Hence \begin{equation}\label{FGs}F(s, t)= P_{s}P_{t} + Q_{s}Q_{t}; \qquad G(s, t)= P_{s}Q_{t} - Q_{s}P_{t}.
\end{equation}
and for each binary string ${s}$: $$F_s = F(s,s) = P_s^2 + Q_s^2  = \overline{\det({\mathbf A}_{s})}\det({\mathbf A}_{s}) = | \det({\mathbf A}_{s})|^2$$
In particular,
\begin{equation}\label{FG}F(0, 1)= P_0P_1 + Q_0Q_1, \qquad 
    G(0, 1)= P_0Q_1 - Q_0P_1. 
\end{equation}

Using the notations $0{s} = {0s_{n-k+1}\ldots s_n}$ and $1{s} = {1s_{n-k+1}\ldots s_n}$, we obtain
\begin{equation}\label{JS}
    F_{s} = d_{n-k}^2F_{0{s}} + 2d_{n-k}G(0{s},1{s}) + F_{1{s}}.\end{equation}
Now state and prove the following (necessary and sufficient) criterion of $D$-stability in terms of the polynomials $F$ and $G$ at first recursion step. 
\begin{theorem}\label{Step1}
    Let ${\mathbf A} \in {\mathcal M}^{n \times n}$ be stable. Then $\mathbf A$ is $D$-stable if and only if the following two conditions holds.
    \begin{enumerate}
        \item[ \rm 1.] Non-degenerate case. For all the values $d_1, \ \ldots, \ d_n$ that satisfy $$F_0(d_1, \ \ldots, d_{n-1})> 0,$$ the equation
        $$F(0, 1)(d_1, \ \ldots, \ d_{n-1}) = 0, $$
        does not have any positive solutions $(d^0_{1}, \ \ldots, \ d^0_{n-1})$, for which $$G(0, 1)(d^0_{1}, \ \ldots, \ d^0_{n-1}) < 0.$$
        \item[ \rm 2.] Degenerate case. All the positive solutions $d^0_{1}, \ \ldots, \ d^0_{n-1}$ of the equation $$F_0(d^0_{1}, \ \ldots, d^0_{n-1})=0$$ satisfy $F_1(d^0_{1}, \ \ldots, d^0_{n-1})>0$.
    \end{enumerate}
\end{theorem}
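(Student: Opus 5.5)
The plan is to reduce everything to Theorem \ref{Johhn}(iii): $\mathbf A$ is $D$-stable if and only if $F>0$ for all $d_1,\ldots,d_n>0$. I would fix an arbitrary positive vector $d'=(d_1,\ldots,d_{n-1})$ and view $F$ as a polynomial in the single variable $d_n>0$. By the recursive formula \eqref{Jrec}, written with \eqref{FG},
$$F = d_n^2F_0(d') + 2d_n\,G(0,1)(d') + F_1(d').$$
So $D$-stability is equivalent to this one-variable quadratic having no positive root for every positive $d'$. The two cases of the theorem correspond to $F_0(d')>0$ and $F_0(d')=0$; since $F_0=|\det{\mathbf A}_0|^2\geq 0$, these cases are exhaustive.

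The key algebraic step is a Lagrange-type identity for the coefficients. Since $\overline{\det({\mathbf A}_0)}\det({\mathbf A}_1)=F(0,1)+iG(0,1)$, taking moduli squared gives
$$F_0F_1=|\det{\mathbf A}_0|^2|\det{\mathbf A}_1|^2 = F(0,1)^2+G(0,1)^2.$$
Equivalently, $(P_0^2+Q_0^2)(P_1^2+Q_1^2)=(P_0P_1+Q_0Q_1)^2+(P_0Q_1-Q_0P_1)^2$. Consequently the quarter-discriminant of the quadratic is
$$G(0,1)^2-F_0F_1=-F(0,1)^2\leq 0.$$

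In the non-degenerate case $F_0(d')>0$, the quadratic has positive leading coefficient and nonpositive discriminant. It therefore has a real zero if and only if $F(0,1)(d')=0$. In that case the zero is the double root $d_n=-G(0,1)(d')/F_0(d')$, which is positive exactly when $G(0,1)(d')<0$. Hence $F$ has a positive zero over such a $d'$ if and only if $d'$ is a positive solution of $F(0,1)=0$ with $G(0,1)<0$, which is precisely what Condition 1 forbids.

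In the degenerate case $F_0(d')=0$, we have $\det{\mathbf A}_0=0$, so $P_0=Q_0=0$ and hence $G(0,1)(d')=0$. Then $F\equiv F_1(d')$ for all $d_n$, and $F>0$ for every $d_n>0$ if and only if $F_1(d')>0$; this is Condition 2. Both directions of the equivalence follow by running these two case analyses over all positive $d'$. One direction uses the constructed positive root $d_n$ as a zero of $F$; the other shows that no zero exists.

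The only step requiring care is spotting and verifying the discriminant identity $G^2-F_0F_1=-F(0,1)^2$, which is what turns the condition ``the quadratic has no positive root'' into an equation plus a sign condition. Once that identity is in hand, the rest is an elementary analysis of a quadratic in $d_n$.
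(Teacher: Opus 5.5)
Your proof is correct, but it follows a different route from the paper's main argument.

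The paper works with the linear system $P=-d_nQ_0+P_1=0$, $Q=d_nP_0+Q_1=0$. For sufficiency, it takes a positive zero of $(P,Q)$ and eliminates $d_n$ to obtain $F(0,1)=0$ and $d_nF_0=-G(0,1)>0$. For necessity, it rebuilds $d_n$ as $P_1/Q_0=-Q_1/P_0$, splitting into subcases according to which of $P_0,Q_0$ vanish. In the subcase $P_0=0$ the paper's displayed value of $d_n$ is misstated; it should be $P_1/Q_0$.

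You instead view $F$ as a quadratic in $d_n$ via \eqref{Jrec} and use the Lagrange identity $F_0F_1=F(0,1)^2+G(0,1)^2$, so the discriminant is $-4F(0,1)^2$. This gives, for each fixed positive $d'$, an exact pointwise equivalence with no subcases and the uniform root $d_n=-G(0,1)/F_0$. It is the same discriminant computation the paper performs one level lower in the proof of Theorem \ref{deg}, where $D=-4F^2(00,10)$. The paper remarks afterwards that this yields an alternative proof of Theorem \ref{Step1}, and your proposal makes that alternative explicit.

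The two routes trade off as follows. The paper's route exhibits the common zero of $P$ and $Q$ directly and needs nothing beyond the linear recurrences. Yours is shorter and symmetric in $P_0,Q_0$. It also explains why $F(0,1)=0$ is the natural condition: it is exactly the vanishing of the discriminant. As a by-product, it shows that the hypothesis $F_0>0$ in Condition 1 is automatic, since $F(0,1)=0$ and $G(0,1)<0$ force $F_0F_1=G(0,1)^2>0$.
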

{\bf Proof.} $(i) \Rightarrow (ii)$. Assume on the contrary, let $\mathbf A$ be $D$-stable, and one of the conditions, either 1 or 2 fails. 

Let Condition 1 fail. Then the equation $F(0,1) = 0$ has a positive solution $d_1^0, \ \ldots, \ d_{n-1}^0$, which satisfies $$F_0(d_1^0, \ \ldots, \ d_{n-1}^0) > 0\qquad \mbox{and} \qquad G(0,1)(d_1^0, \ \ldots, \ d_{n-1}^0) < 0.$$ First consider the case when both $P_0$ and $Q_0$ do not vanish on $d_1^0, \ \ldots, \ d_{n-1}^0$. Then $P_0P_1 + Q_0Q_1 = 0$ implies $\frac{P_1}{Q_0} = - \frac{Q_1}{P_0}$, while the condition $P_0Q_1 - Q_0P_1 < 0$ (after dividing by $P_0^2Q_0^2$) shows that $\frac{P_1}{Q_0} = - \frac{Q_1}{P_0} > 0$ on $d_1^0, \ \ldots, \ d_{n-1}^0$. Hence we can put $$d^0_{nn}:= \frac{P_1}{Q_0}(d_1^0, \ \ldots, \ d_{n-1}^0) = - \frac{Q_1}{P_0}(d_1^0, \ \ldots, \ d_{n-1}^0) > 0$$ and substitute it into System \eqref{expsyst1}, obtaining:
$$\begin{cases}P(d_1^0, \ \ldots, \ d_n^0) = 0; \\
Q(d_1^0, \ \ldots, \ d_n^0) = 0 \end{cases}
$$
This contradicts Theorem \ref{Johhn}, (iii).

Now consider the case, when one of $P_0$ or $Q_0$ equals zero, while the other is not. Consider the case when $P_0 = 0,$ $Q_0 \neq 0$ (the opposite case is considered analogically). Then $d_n :=-\frac{Q_0}{P_1} > 0$, which vanishes the first equation in System \eqref{expsyst1}, while the second one is identical zero. This again contradicts Theorem \ref{Johhn}, (iii). 

Finally, let Condition 2 fails. Then there is a set of positive numbers $(d_1^0, \ \ldots, \ d_{n-1}^0)$, such that $$\begin{cases}F_0(d_1^0, \ \ldots, \ d_{n-1}^0) = 0; \\ F_1(d_1^0, \ \ldots, \ d_{n-1}^0) = 0\end{cases} \quad \mbox{or, equivalently,} \quad \begin{cases}P_0(d_1^0, \ \ldots, \ d_{n-1}^0) = 0; \\ Q_0(d_1^0, \ \ldots, \ d_{n-1}^0) = 0; \\ P_1(d_1^0, \ \ldots, \ d_{n-1}^0) = 0; \\ Q_1(d_1^0, \ \ldots, \ d_{n-1}^0) = 0.\end{cases}$$
Substituting into \eqref{Jrec}, we obtain $$F(d_1^0, \ \ldots, \ d_{n-1}^0, \ d_n) = 0$$
for every $d_n \in {\mathbb R}$, which contradicts Theorem \ref{Johhn}, (iii).

$(ii) \Rightarrow (i)$ By Theorem \ref{Johhn}, (ii) the matrix $\mathbf A$ is $D$-stable if and only if System \eqref{sy} has no positive solutions. Assume on the contrary, let $\mathbf A$ be not $D$-stable, that is, $$\begin{cases}P(d^0_{1}, \ \ldots, \ d^0_{n}) = 0; \\
Q(d^0_{1}, \ \ldots, \ d^0_{n}) = 0 \end{cases}$$ for some set of positive values $(d^0_{1}, \ \ldots, \ d^0_{n})$. Now consider the following two cases.

Case I. Let $F_0(d^0_{1}, \ \ldots, \ d^0_{n-1}) = 0$. Then $P_0$ and $Q_0$ vanish simultaneously. From System \eqref{expsyst1}, we obtain that $P_1$ and $Q_1$ also vanish simultaneously for the same positive set $(d^0_{1}, \ \ldots, \ d^0_{n-1})$ and Condition 2 fails.

Case II. Now let $F_0(d^0_{1}, \ \ldots, \ d^0_{n-1}) > 0$. Then either $P_0 \neq 0$ or $Q_0 \neq 0$ on $(d^0_{1}, \ \ldots, \ d^0_{n-1})$. Thus we can extract $d_n^0$ from either the first or the second equation of System \eqref{expsyst1}:
\begin{equation}\label{dnn}0 < d_n^0 = \frac{P_1}{Q_0} \qquad \mbox{or} \qquad 0 < d_n^0 = - \frac{Q_1}{P_0}.\end{equation}
Substituting $d_n^0$ into the other equation, we get that the positive set $(d^0_{1}, \ \ldots, \ d^0_{n-1})$ is a solution of the equation $$F(0,1) = P_1P_0 + Q_1Q_0 = 0.$$
Then, multiplying the nonzero fraction from \eqref{dnn}, by $Q_0^2$, if $Q_0 \neq 0$ (or, respectively, by $P_0^2$, if $P_0 \neq 0$), we get
$$Q_0^2d_n^0 = P_1Q_0 \geq 0; \qquad P_0^2d_n^0 = - Q_1P_0 > 0$$
with at least one inequality being strict. By summation, we obtain 
$$d_n^0(Q_0^2 + P_0^2) = -G(0, 1) > 0$$
for the same positive set $(d^0_{1}, \ \ldots, \ d^0_{n-1})$. Then Condition 1 also fails.
$\square$

It is easy to see that the condition $F(0, 1) = 0$ is equivalent to the solvability of the system 
$$\begin{cases}d_nP_0 = - Q_1 \\ d_nQ_0 = P_1 \end{cases}$$ Multiplying the equations by $P_0$ and $Q_0$ respectively, and adding, we obtain:
$$d_n(P_0^2 + Q_0^2) = P_1Q_0 - Q_1P_0 = -G(0, 1). $$
Hence in the case $F_0 > 0$, we obtain $d_n = - \frac{G(0, 1)}{F_0}$. 
\begin{corollary}[Sufficient condition for $D$-stability]\label{suff}
  Let $\mathbf A$ be a stable $n\times n$ matrix. Then for $\mathbf A$ to be $D$-stable, it is sufficient that one of the following conditions holds:
  \begin{enumerate}
      \item[\rm 1.] the inequality $F(0,1) > 0$ holds for every positive set $(d_1, \ \ldots, \ d_{n-1})$;
      \item[\rm 2.] the inequality $G(0,1) > 0$ holds for every positive set $(d_1, \ \ldots, \ d_{n-1})$.
      \end{enumerate}
\end{corollary}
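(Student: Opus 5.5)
The plan is to argue by contradiction directly from Johnson's criterion (Theorem \ref{Johhn}, (ii)) together with the first-step recurrence relations \eqref{expsyst1} of Lemma \ref{explemma1}. This avoids the case split of Theorem \ref{Step1}, since both sufficient conditions are strict inequalities that will also cover the degenerate case $F_0=0$ automatically.

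Suppose $\mathbf A$ is stable but not $D$-stable. By Theorem \ref{Johhn}, (ii), there is a positive set $(d^0_1, \ \ldots, \ d^0_n)$ with $P(d^0_1,\ldots,d^0_n)=Q(d^0_1,\ldots,d^0_n)=0$. Evaluate $P_0,Q_0,P_1,Q_1$ at $(d^0_1,\ldots,d^0_{n-1})$; recall that these polynomials do not depend on $d_n$. The relations \eqref{expsyst1} then give
$$d^0_nQ_0 = P_1, \qquad d^0_nP_0 = -Q_1 .$$
Substituting into the definitions \eqref{FG} yields
$$F(0,1) = P_0P_1 + Q_0Q_1 = d^0_nP_0Q_0 - d^0_nQ_0P_0 = 0,$$
$$G(0,1) = P_0Q_1 - Q_0P_1 = -d^0_n(P_0^2+Q_0^2) = -d^0_nF_0 \le 0,$$
both at the positive point $(d^0_1,\ldots,d^0_{n-1})$. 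The first identity contradicts Condition 1, because $F(0,1)$ would have to be strictly positive there. The second contradicts Condition 2, because $d^0_n>0$ and $F_0\ge 0$ force $G(0,1)\le 0$.

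Both computations are routine, so I do not expect a genuine obstacle. The only point needing care is the degenerate situation $F_0(d^0_1,\ldots,d^0_{n-1})=0$. In that case $P_0=Q_0=0$, and the relations force $P_1=Q_1=0$ as well. Then $F(0,1)=G(0,1)=0$, which still violates both strict inequalities, so no separate argument is needed.

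Alternatively, one could read the corollary off Theorem \ref{Step1}. Condition 1 makes the equation $F(0,1)=0$ unsolvable, which settles the non-degenerate case. Condition 2 excludes $G(0,1)<0$. In either case the degenerate case of Theorem \ref{Step1} must still be checked, for which one would use the observation above that $F_0=0$ at a zero of $(P,Q)$ forces $F(0,1)=G(0,1)=0$. The direct argument is cleaner, so I would present that one.
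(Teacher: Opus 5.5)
Your proof is correct. It differs from the paper's mainly in how it is organized.

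\textbf{The paper's route.} It first observes, directly from \eqref{FG}, that $F_0=0$ at any positive point forces $P_0=Q_0=0$, and hence $F(0,1)=G(0,1)=0$. So either condition of the corollary forces $F_0>0$ on the whole positive orthant. Then Condition~2 of Theorem~\ref{Step1} holds vacuously. Condition~1 of Theorem~\ref{Step1} also holds, because $F(0,1)=0$ has no positive solutions (respectively, none with $G(0,1)<0$). The conclusion follows from Theorem~\ref{Step1}.

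\textbf{Your route.} You bypass Theorem~\ref{Step1} and argue directly from Johnson's criterion. In effect you inline the $(ii)\Rightarrow(i)$ half of its proof, but in a tidier form. At a positive zero of $(P,Q)$, the identities $F(0,1)=0$ and $G(0,1)=-d_n^0F_0\le 0$ hold uniformly. So you need no split between $F_0=0$ and $F_0>0$, and none between $P_0\neq 0$ and $Q_0\neq 0$.

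\textbf{Trade-off.} Your version is self-contained and uses only the direction actually needed. The paper's is shorter once Theorem~\ref{Step1} is available, and it presents the corollary as a specialization of the exact criterion.

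\textbf{A correction to your closing remark.} In the route through Theorem~\ref{Step1}, the degenerate case needs no reference to a zero of $(P,Q)$. The condition $F_0=0$ alone already gives $F(0,1)=G(0,1)=0$ at that point, which is exactly the paper's opening observation.
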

{\bf Proof.} First note that if $F_0(d_1 \ldots d_{n-1}) = 0$ (degenerate case) then both $F(0,1) = 0$ and $G(0,1) = 0$. 
Hence if either Condition $1$ or Condition $2$ of Corollary \ref{suff} holds, then $F_0 > 0$ for every positive set $(d_1, \ \ldots, \ d_{n-1})$ (non-degenerate case of Theorem \ref{Step1}). 

Let Condition 1 holds. Then the multi-variable polynomial $F(0,1)$ preserves its sign (positive) everywhere on the positive orthant $$\{(d_1, \ \ldots, \ d_{n-1}): \ d_i > 0, \ i = 1, \ \ldots, \ n-1\}.$$ Thus the equation $F(0,1) = 0$ do not have any positive solutions $(d_1^0, \ \ldots, \ d_{n-1}^0)$ and $\mathbf A$ is $D$-stable by Theorem \ref{Step1}.

Now let Condition 2 holds. Then obviously $G(0,1) > 0$ for all positive solutions $(d_1^0, \ \ldots, \ d_{n-1}^0)$ of the equation $F(0,1) = 0$ and again $\mathbf A$ is $D$-stable by Theorem \ref{Step1}.
$\square$

Note that both the polynomials $F(0,1)$ and $G(0,1)$ are of degree no more than 2 in each variable $d_1, \ \ldots, \ d_{n-1}$.

\section{Analyzing Step 2}
After Step 1, we have two resulting matrices, ${\mathbf A}_0$ and ${\mathbf A}_1$. Processing index $n-2$ and applying Lemma \ref{rec} to each branch, we obtain the following recurrence relations:
\begin{equation}\label{expsyst2a}\begin{cases}P_0 = - d_{n-1}Q_{00} + P_{10}; \\
Q_0 = d_{n-1}P_{00} + Q_{10}
\end{cases}\end{equation}
and 
\begin{equation}\label{expsyst2b}\begin{cases}P_1 = - d_{n-1}Q_{01} + P_{11}; \\
Q_1 = d_{n-1}P_{01} + Q_{11}
\end{cases}\end{equation}

Substituting into Formulas \eqref{FG} and collecting with respect to the powers of $d_{n-1}$, we obtain for $F(0,1)$, the following quadratic expression in variable $d_{n-1}$:
$$F(0,1) = (P_{00}P_{01} + Q_{00}Q_{01})d_{n-1}^2 + $$
$$(P_{00}Q_{11} - Q_{00}P_{11} + P_{01}Q_{10} - Q_{01}P_{10})d_{n-1} + P_{10}P_{11} +Q_{10}Q_{11}.$$
Using the notations \eqref{FGs}, we obtain:
$$F(00,01) = P_{00}P_{01} + Q_{00}Q_{01}; \qquad F(10,11) = P_{10}P_{11} +Q_{10}Q_{11}; $$
$$G(00, 11) = P_{00}Q_{11} - Q_{00}P_{11}; \qquad G(01,10)= P_{01}Q_{10} - Q_{01}P_{10} $$
and re-write the expression as follows:
\begin{equation}\label{Fexp}
    F(0,1) = F(00,01)d^2_{n-1} + (G(00,11) - G(10,01)) d_{n-1} + F(10,11).
\end{equation}
Analogically, for $G(0,1)$, we obtain:
$$G(0,1) = (P_{00}Q_{01} - Q_{00}P_{01})d^2_{n-1} +$$
$$(P_{10}P_{01} + Q_{10}Q_{01} -(P_{00}P_{11} + Q_{00}Q_{11}))d_{n-1} + P_{10}Q_{11} - Q_{10}P_{11}.$$
Since
$$G(00,01) = P_{00}Q_{01} - Q_{00}P_{01}; \qquad G(10,11) = P_{10}Q_{11} - Q_{10}P_{11}; $$
$$F(00, 11) = P_{00}P_{11} - Q_{00}Q_{11}; \qquad F(01,10)= P_{01}P_{10} + Q_{01}Q_{10} $$
we obtain:
\begin{equation}\label{Gexp}
    G(0,1) = G(00,01)d^2_{n-1} + ( F(01,10) - F(00,11)) d_{n-1} + G(10,11).
\end{equation}

\subsection{Non-degenerate case}
Let us analyze the case, when the system $$\begin{cases} F_0(d_1, \ \ldots, d_{n-1})> 0; \\ F(0, 1)(d_1, \ \ldots, \ d_{n-1}) = 0, \end{cases}$$
        does not have any positive solutions $(d^0_{1}, \ \ldots, \ d^0_{n-1})$, for which $$G(0, 1)(d^0_{1}, \ \ldots, \ d^0_{n-1}) < 0.$$
Since $F_0 = P_0^2 + Q_0^2 \neq 0$, we have two cases: $Q_0 \neq 0$ and $Q_0 = 0, \ P_0 \neq 0$.
\begin{enumerate}
\item[\rm 1.] First consider the case when $Q_0 \neq 0$.  By $Q_0 = d_{n-1}P_{00} + Q_{10}$, the necessary condition is $P^2_{00} + Q^2_{10} \neq 0$. Extracting $Q_1 = - \frac{P_0 P_1}{Q_0}$ and substituting into
$G(0,1) = P_0Q_1 - Q_0P_1$, we get $G(0,1) = - \frac{P_1}{Q_0}F_0$. Since $F_0 > 0$, the inequality $G(0,1) < 0$ is equivalent to $P_1Q_0 > 0$. Hence we obtain the system
$$\begin{cases}F(0,1) = 0; \\  P_1Q_0 > 0 \end{cases}$$
Substituting from \eqref{expsyst2a} - \eqref{Fexp}: 
\begin{equation}\label{bad}
    \begin{cases} F(00,01)d^2_{n-1} + (G(00,11) - G(10,01)) d_{n-1} + F(10,11) = 0; \\  (-d_{n-1}Q_{01} + P_{11})(d_{n-1}P_{00} + Q_{10}) > 0 \end{cases}\end{equation} 

To check, when System \eqref{bad} have no positive solutions, it is enough to check, when the quadratic (in general case) polynomial $F(0,1)(d_{n-1})$ have no zeroes in the set
$$S = (0, +\infty)\cap\{(-d_{n-1}Q_{01} + P_{11})(d_{n-1}P_{00} + Q_{10}) > 0 \}. $$
Note, that if $F(00, 01) = 0$ then the equation $F(0,1)$ is linear and has:
\begin{enumerate}
    \item[-] one zero (namely, $d_{n-1} = -\frac{F(10, 11)}{G(00,11) - G(10,01)}$) if $G(00,11) - G(10,01) \neq 0$, 
    \item[-] no zeroes if $G(00,11) - G(10,01) = 0$ and $F(10, 11) \neq 0$, 
     \item[-] is identically zero if all the coefficients are zero.  
\end{enumerate}

First assume that $Q_{01}P_{00} \neq 0$. In this case, we obtain the inequality
$$Q_{01}P_{01}\left(d_{n-1} - \frac{P_{11}}{Q_{01}}\right)\left(d_{n-1} + \frac{Q_{10}}{P_{00}}\right) > 0.$$
Denote $m$ and $M$ the minimum and maximum from the values $\frac{P_{11}}{Q_{01}}$ and $-\frac{Q_{10}}{P_{00}}$, respectively. Note that 
$$\frac{P_{11}}{Q_{01}} + \frac{Q_{10}}{P_{00}} = \frac{P_{11}P_{00} + Q_{01}Q_{10}}{Q_{01}P_{00}} \geq 0 \qquad \mbox{iff} \qquad M = \frac{P_{11}}{Q_{01}}.$$

Through the basic analysis of zero location of the quadratic polynomial $F(0, 1)(d_{n-1})$, we obtain
\begin{enumerate}
    \item[\rm 1.] $Q_{01}P_{00} < 0$: \begin{enumerate}
        \item[\rm 1a.] If $M \leq 0$ then $S = \emptyset$: automatically no zeroes in $S$. Hence this case corresponds to the system
        $$\begin{cases}Q_{01}P_{00} < 0; \\ P_{11}Q_{01} \leq 0; \\ Q_{10}P_{00} \geq 0. \end{cases}$$
\item[\rm 1b.] If $m \leq 0 < M$: $S = (0, M)$. From the condition $mM \leq 0$ we get $-\frac{P_{11}Q_{10}}{Q_{01}P_{00}} \leq 0$ and the system $$\begin{cases} Q_{01}P_{00} < 0; \\ P_{11}Q_{10} \leq 0. \end{cases}$$ 
  For zero localization outside $S$, we obtain the following necessary and sufficient conditions: 
  for the values at the endpoints of the interval (taking into account that $F(0,1)(0) = F(10,11)$) 
  $$F(10,11)*F(0,1)(M) > 0,$$
  and for the parabola vertex coordinates $(d_v, F_v)$
  $$[d_v \notin (0, M) \ \mbox{or} \ (d_v \in (0, M) \ \mbox{and} \ F_v * F(10,11) > 0)],$$
  where $$d_v = - \frac{G(00,11) - G(10,01)}{ 2F(00,01)};$$ $$F_v = - \frac{D}{4F(00,01)};$$
  $$D = (G(00,11) - G(10,01))^2 - 4F(00,01)F(10,11). $$

  In the case $F(00, 01) = 0$, the conditions are $$ -\frac{F(10, 11)}{G(00,11) - G(10,01)} < 0$$ or $$ -\frac{F(10, 11)}{G(00,11) - G(10,01)} > M.$$ 

\item[\rm 1c.] If $0 \le m < M$: $S = (m, M)$.  
 The necessary and sufficient conditions are as follows: $$F(0,1)(m)*F(0,1)(M) > 0$$ and $$[d_v \notin (m, M) \ \mbox{or} \ (d_v \in (m, M) \ \mbox{and} \ F_v * F(0,1)(m) > 0)].$$

 In the case $F(00, 01) = 0$, the conditions are $$ -\frac{F(10, 11)}{G(00,11) - G(10,01)} < m$$ or $$ -\frac{F(10, 11)}{G(00,11) - G(10,01)} > M.$$
 \end{enumerate}
\item[\rm 2.] $Q_{01}P_{00} > 0$:
 \begin{enumerate}
        \item[\rm 2a.] If $M < 0$ then $S = (0, +\infty)$: if $F(0,1)$ has any positive zeroes, they are automatically in $S$. The necessary and sufficeient condition for $F(0,1)$ to have no positive zeroes are as follows:
        $$F(00,01)(G(00,11) - G(10,01)) \geq 0; \qquad F(00,01)F(10,11) > 0 $$ or
        $$F(00,01)(G(00,11) - G(10,01)) < 0; \qquad D < 0. $$
        In the case $F(00, 01) = 0$, the conditions are $$ -\frac{F(10, 11)}{G(00,11) - G(10,01)} < 0.$$
\item[\rm 2b.] If $m < 0 \leq M$: $S = (M, \infty)$.  
  For zero localization outside $S$, we obtain the following necessary and sufficient conditions:
  $$F(00, 01)*F(0,1)(M) > 0$$ and $$[d_v \leq M \ \mbox{or} \ (d_v > M \ \mbox{and} \ D < 0)].$$ 
  In the case $F(00, 01) = 0$, the conditions are $$ -\frac{F(10, 11)}{G(00,11) - G(10,01)} < M.$$
\item[\rm 2c.] If $0 \le m < M$: $S = (0, m)\cup(M, +\infty)$.  
 The necessary and sufficient conditions are the intersection of the above cases: $$F(10,11)*F(0,1)(m) > 0;$$  $$[d_v \notin (0, m) \ \mbox{or} \ (d_v \in (0, m) \ \mbox{and} \ F_v * F(0,1)(m) > 0)];$$
 and
   $$F(00, 01)*F(0,1)(M) > 0;$$ 
 $$[d_v \leq M \ \mbox{or} \ (d_v > M \ \mbox{and} \ D < 0)].$$
 \end{enumerate} In the case $F(00, 01) = 0$, the conditions are $ -\frac{F(10, 11)}{G(00,11) - G(10,01)} < 0$ or $m < -\frac{F(10, 11)}{G(00,11) - G(10,01)} < M$.
 \item[\rm 3.] Consider the third case when either $Q_{01} = 0$ or $P_{00} = 0$ (if $Q_{01} = P_{00} = 0$ then $F(00,01) = 0$ and the equation $F(0,1) = 0$ becomes linear).
 \begin{enumerate}
\item[\rm 3a.] Let $Q_{01} = 0$. In this case, $M = -\frac{Q_{10}}{P_{00}}$.
Let $P_{11}P_{00} < 0$. In this case, if $M \leq 0$ then $S = \emptyset$.
 If $M > 0$ then $S = (0, M)$.  
 The necessary and sufficient conditions are as follows: 
 $$F(10,11)*F(0,1)(M) > 0$$ and $$[d_v \notin (0, M) \ \mbox{or} \ (d_v \in (0, M) \ \mbox{and} \ F_v * F(0,1)(M) > 0)].$$
 In the case $F(00, 01) = 0$, the conditions are $$ -\frac{F(10, 11)}{G(00,11) - G(10,01)} < 0$$ or $$ -\frac{F(10, 11)}{G(00,11) - G(10,01)} > M.$$

If $P_{11}P_{00} > 0$, then $M \leq 0$ implies $S = (0, + \infty)$. Hence if $F(0,1)$ has any positive zeroes, they are automatically in $S$. The necessary and sufficient conditions for $F(0,1)$ to have no positive zeroes are given above in Part 2a.

If $M > 0$, $S = (M, \infty)$. 
  For zero localization outside $S$, we obtain the following necessary and sufficient conditions:
  $$F(00,01)*F(0,1)(M) > 0$$ and $$[d_v \leq M \ \mbox{or} \ (d_v > M \ \mbox{and} \ D < 0)].$$ 
  In the case $F(00, 01) = 0$, the conditions are $ -\frac{F(10, 11)}{G(00,11) - G(10,01)} < M$.
  
\item[\rm 3b.] Let $P_{00} = 0$, then $M = \frac{P_{11}}{Q_{01}}$. For $Q_{10}Q_{01} < 0$, we have the following options: if $M \leq 0$ then $S = \emptyset$ and, respectively, if $M > 0$ then $S = (0, M)$.  
 The necessary and sufficient conditions are as follows: $$F(10,11)*F(0,1)(M) > 0$$ and $$[d_v \notin (0, M) \ \mbox{or} \ (d_v \in (0, M) \ \mbox{and} \ F_v * F(0,1)(M) > 0)].$$
 In the case $F(00, 01) = 0$, the conditions are $$ -\frac{F(10, 11)}{G(00,11) - G(10,01)} < 0$$ or $$ -\frac{F(10, 11)}{G(00,11) - G(10,01)} > M.$$

Let $Q_{10}Q_{01} > 0$, then $M \leq 0$ implies $S = (0, + \infty)$. if $F(0,1)$ has any positive zeroes, they are automatically in $S$, so for the necessary and sufficient conditions see Part 2a.
If $M > 0$, $S = (M, \infty)$. 
  For zero localization outside $S$, we obtain the following necessary and sufficient conditions:
  $$F(00,01)*F(0,1)(M) > 0$$ and $$[d_v \leq M \ \mbox{or} \ (d_v > M \ \mbox{and} \ D < 0)].$$ 
  In the case $F(00, 01) = 0$, the conditions are $-\frac{F(10, 11)}{G(00,11) - G(10,01)} < M$.

 Let $P_{00} = Q_{01} = 0$ then $S = \emptyset$ if $P_{11}Q_{10} \leq 0$ and $S = (0, + \infty)$ if $P_{11}Q_{10} > 0$. For $F(00, 01) = 0$, the conditions are $ -\frac{F(10, 11)}{G(00,11) - G(10,01)} < 0$.
 \end{enumerate}
\end{enumerate}
\item[\rm 2.] $Q_0 = 0$. Then $F(0,1) = 0$ and $F_0 > 0$ imply $P_0 \neq 0$ and $P_1 = 0$. Then $G(0,1) = P_0Q_1 < 0$ and we have the system:
\begin{equation}\label{2}\begin{cases}Q_0 = P_1 = 0; \\ P_0Q_1 < 0 \end{cases}\end{equation}

By substitution from \eqref{expsyst2a}-\eqref{expsyst2b}, and taking into account that $$P_0 = - d_{n-1}Q_{00} + P_{10} \neq 0$$ we obtain:  
\begin{equation}\label{2}\begin{cases}Q_{00}^2 + P^2_{10} \neq 0; \\ d_{n-1}P_{00} + Q_{10} = - d_{n-1}Q_{01} + P_{11} = 0; \\ (- d_{n-1}Q_{00} + P_{10})(d_{n-1}P_{01} + Q_{11}) < 0. \end{cases}\end{equation}
\end{enumerate}

\begin{lemma}\label{Q0} System \eqref{2} admits a positive solution if and only if the following conditions hold:
\begin{enumerate}
    \item[\rm 1.] For the non-degenerate case $P_{00} \neq 0$, 
$$\begin{cases}P_{00}P_{11}+Q_{10}Q_{01} = 0; \\
P_{11}Q_{01} - P_{00}Q_{10} > 0; \\
(P_{10}P_{00} + Q_{10}Q_{00})(P_{00}Q_{11} - Q_{10}P_{01}) < 0. \\
\end{cases} $$
\item[\rm 2.] For the degenerate case $P_{00} = 0$, we have
$$\begin{cases}P_{11}Q_{01} > 0; \\ (P_{10}Q_{01} - P_{11}Q_{00})(P_{11}P_{01} + Q_{11}Q_{01})< 0  \end{cases}$$
\end{enumerate}
In both cases, the solution is unique and given by the formula 
$$d_{n-1} = \frac{P_{11}Q_{01} - P_{00}Q_{10}}{P^2_{00} + Q^2_{01}} > 0.$$
\end{lemma}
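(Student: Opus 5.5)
The plan is to treat all quantities $P_{00},Q_{00},P_{10},Q_{10},P_{01},Q_{01},P_{11},Q_{11}$ as fixed real numbers. They depend only on $d_1,\ldots,d_{n-2}$. The only unknown in System \eqref{2} is then $d_{n-1}$, and the system becomes two \emph{linear} equations in one unknown plus one sign condition. So I would solve the linear part explicitly, impose compatibility and positivity, and substitute the solution into the inequality. Throughout I write $d:=d_{n-1}$.

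\textbf{Uniqueness formula.} I would first derive the formula common to both cases. Multiply $dP_{00}+Q_{10}=0$ by $P_{00}$ and $-dQ_{01}+P_{11}=0$ by $Q_{01}$, then subtract. This gives
$$d\,(P_{00}^2+Q_{01}^2)=P_{11}Q_{01}-P_{00}Q_{10}.$$
Whenever $P_{00}^2+Q_{01}^2\neq 0$, any solution is therefore unique and equals the stated formula. It is positive exactly when $P_{11}Q_{01}-P_{00}Q_{10}>0$.

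\textbf{Non-degenerate case $P_{00}\neq0$.}
\begin{enumerate}
\item[-] The first equation forces $d=-Q_{10}/P_{00}$.
\item[-] Substituting this into the second equation gives $Q_{10}Q_{01}/P_{00}+P_{11}=0$. This is equivalent to the compatibility condition $P_{00}P_{11}+Q_{10}Q_{01}=0$.
\item[-] Under compatibility, positivity of $d$ is equivalent to positivity of the numerator above, which is the second condition.
\item[-] For the inequality, substitute $d=-Q_{10}/P_{00}$ into $(-dQ_{00}+P_{10})(dP_{01}+Q_{11})<0$ and multiply by $P_{00}^2>0$. This yields $(P_{10}P_{00}+Q_{10}Q_{00})(P_{00}Q_{11}-Q_{10}P_{01})<0$.
\item[-] The strict inequality forces $-dQ_{00}+P_{10}\neq0$, so the requirement $Q_{00}^2+P_{10}^2\neq0$ is automatic.
\end{enumerate}
Each step is reversible, which gives both directions of the equivalence.

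\textbf{Degenerate case $P_{00}=0$.} Here the first equation reduces to $Q_{10}=0$, which I would make explicit as an implicit hypothesis of the degenerate branch. The second equation then needs $Q_{01}\neq0$; if $Q_{01}=0$, then $d$ is undetermined and the uniqueness claim fails, so this subcase must be excluded. With $Q_{01}\neq0$ we get $d=P_{11}/Q_{01}$, and $d>0$ is equivalent to $P_{11}Q_{01}>0$. Substituting into the inequality and multiplying by $Q_{01}^2>0$ gives $(P_{10}Q_{01}-P_{11}Q_{00})(P_{11}P_{01}+Q_{11}Q_{01})<0$. The general formula reduces to $P_{11}Q_{01}/Q_{01}^2=P_{11}/Q_{01}$, which confirms uniqueness.

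\textbf{Main obstacle.} The algebra is routine. The obstacle is the bookkeeping of the degenerate subcases. One must check that the silent requirements ($Q_{10}=0$ and $Q_{01}\neq0$ when $P_{00}=0$, and $Q_{00}^2+P_{10}^2\neq0$) are either implied by the displayed conditions or must be appended to them. Only then is the ``if and only if'' exact.
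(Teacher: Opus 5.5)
Your proposal is correct and follows the paper's route: solve the two linear equations for $d_{n-1}$, impose compatibility, and multiply by $P_{00}$ and $Q_{01}$ to get $d_{n-1}(P_{00}^2+Q_{01}^2)=P_{11}Q_{01}-P_{00}Q_{10}$. You also substitute into the sign condition and clear $P_{00}^2$ or $Q_{01}^2$, a step the paper leaves implicit. Your caveats in the branch $P_{00}=0$ are genuinely needed, not pedantic: the listed conditions omit $Q_{10}=0$, and when $P_{00}=Q_{01}=0$ the system can hold for every $d_{n-1}>0$ while $P_{11}Q_{01}>0$ fails; the paper dismisses that subcase only by invoking $Q_{01}^2+P_{11}^2\neq 0$, which is not part of System \eqref{2}.
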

{\bf Proof.} Solving the equations $Q_0 = 0$ and $P_1 = 0$ with respect to $d_{n-1}$ and equating the solutions, we obtain:
$$d_{n-1} = - \frac{Q_{10}}{P_{00}} = \frac{P_{11}}{Q_{01}} > 0,$$
that implies $P_{11}Q_{00} + Q_{10}Q_{01} = 0$. (Note that $P_{00} = Q_{01} = 0$ implies $Q_{10} = P_{11} = 0$ and do not satisfy the last inequality.) Multiplying the equalities $P_{00}d_{n-1} = - Q_{10}$ and $Q_{01}d_{n-1} = P_{11}$ by $P_{00}$ and $Q_{01}$ respectively and adding, we obtain $d_{n-1}(P^2_{00} + Q^2_{01}) = P_{11}Q_{01} - P_{00}Q_{10} > 0$.

If $P_{00} = Q_{01} = 0$ then, by the first equalities, $Q_{10} = P_{11} = 0$ that contradicts $Q_{01}^2 + P^2_{11} \neq 0$.  
$\square$
\begin{corollary}
  System \eqref{2} admits no positive solution if and only if the following conditions hold: for $P_{00} \neq 0$
\begin{enumerate}
    \item[\rm 1.] $P_{00}P_{11} + Q_{10}Q_{01}  \neq 0$;
    \item[\rm 2.] $P_{11}Q_{01} \leq 0$ and $P^2_{11} + Q^2_{01} \neq 0$;
    \item[\rm 3.] $P_{11}Q_{01} > 0$ and $(P_{10}Q_{01} - P_{11}Q_{00})F(11,01) \geq 0$.
\end{enumerate}
For $P_{00} = 0$, either $P_{11}Q_{01} \leq 0$ or $$ P_{11}Q_{01} > 0 \qquad \mbox{and} \qquad (P_{10}Q_{01} - P_{11}Q_{00})F(11,01)\geq 0. $$
\end{corollary}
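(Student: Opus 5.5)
The corollary is the logical negation of Lemma \ref{Q0}, so the plan is to negate the two sets of conditions there, case by case, and then simplify. Throughout, $d_{n-1}$ is the only unknown; all $P_{st}$, $Q_{st}$ are treated as fixed values at a positive point $(d_1,\ldots,d_{n-2})$.

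\textbf{Case $P_{00}\neq 0$.} The equation $d_{n-1}P_{00}+Q_{10}=0$ forces $d_{n-1}=-Q_{10}/P_{00}$. Substituting this into $-d_{n-1}Q_{01}+P_{11}=0$ and clearing $P_{00}$ gives the compatibility condition
\[
P_{00}P_{11}+Q_{10}Q_{01}=0 .
\]
Positivity of $d_{n-1}$ is equivalent to $-Q_{10}P_{00}>0$. On the compatibility surface this is the same as $P_{11}Q_{01}>0$, because $d_{n-1}Q_{01}=P_{11}$ and $d_{n-1}>0$ give $P_{11}Q_{01}=d_{n-1}Q_{01}^2\ge 0$. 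Equality $P_{11}Q_{01}=0$ would force $Q_{01}=P_{11}=0$, which is excluded by the nondegeneracy of the second equation. Hence no positive solution exists exactly when one of the following holds:
\begin{enumerate}
\item[(1)] compatibility fails, i.e.\ $P_{00}P_{11}+Q_{10}Q_{01}\neq 0$;
\item[(2)] the root is nonpositive, or the equation $-d_{n-1}Q_{01}+P_{11}=0$ is degenerate, giving $P_{11}Q_{01}\le 0$ (with the proviso $P_{11}^2+Q_{01}^2\neq 0$ separating the genuine sign obstruction from the identically-zero case);
\item[(3)] the root is positive but the strict inequality of System \eqref{2} fails.
\end{enumerate}
These are the three alternatives of the corollary.

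\textbf{The inequality in case (3).} This is the key computation. If $Q_{01}\neq 0$, substitute $d_{n-1}=P_{11}/Q_{01}$ into the product $(-d_{n-1}Q_{00}+P_{10})(d_{n-1}P_{01}+Q_{11})$ and multiply by $Q_{01}^2>0$. This gives
\[
(P_{10}Q_{01}-P_{11}Q_{00})(P_{11}P_{01}+Q_{11}Q_{01}) = (P_{10}Q_{01}-P_{11}Q_{00})\,F(11,01),
\]
using \eqref{FGs}. Negating the strict ``$<0$'' yields ``$\ge 0$'', which is condition 3 of the corollary. Using the $Q_{01}$-form of the root rather than the $P_{00}$-form avoids the mixed expression of Lemma \ref{Q0}, Part 1. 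The two forms agree because of the compatibility relation, and I would verify this once by substituting $Q_{10}=-P_{00}P_{11}/Q_{01}$.

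\textbf{Case $P_{00}=0$.} The first equation reads $Q_{10}=0$ and no longer determines $d_{n-1}$, so the root must come from $d_{n-1}Q_{01}=P_{11}$. A positive root exists iff $P_{11}Q_{01}>0$. Given that, the inequality of System \eqref{2} becomes exactly the same expression in $F(11,01)$ as above. Negating Lemma \ref{Q0}, Part 2 then gives the stated disjunction.

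\textbf{Main obstacle.} The hard part is the bookkeeping of the degenerate subcases, not any estimate. These include $Q_{01}=0$ while $P_{00}\neq 0$, and the side constraint $Q_{00}^2+P_{10}^2\neq 0$. When $Q_{01}=0$ and $P_{11}=0$, the second equation is vacuous, so the root is determined by the first equation alone. I would check directly that this situation is either excluded by the proviso in (2) or already covered by (1) and (3). If that check fails, I would add it as an explicit extra alternative rather than force it into the stated form.
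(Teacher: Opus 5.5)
Your main line follows the paper's route. You negate Lemma~\ref{Q0} and show that on the compatibility set $P_{00}P_{11}+Q_{10}Q_{01}=0$ a positive root is the same as $P_{11}Q_{01}>0$. Your version of that step, $P_{11}Q_{01}=d_{n-1}Q_{01}^2$, is cleaner than the paper's sign argument $P_{11}Q_{01}\cdot P_{00}Q_{10}=-(P_{00}P_{11})^2\le 0$. Your substitution $d_{n-1}=P_{11}/Q_{01}$ also derives condition~3, which the paper leaves implicit. The consistency check you promise works: with $Q_{10}=-P_{00}P_{11}/Q_{01}$, the product in Lemma~\ref{Q0}, Part~1 equals $(P_{00}/Q_{01})^2(P_{10}Q_{01}-P_{11}Q_{00})F(11,01)$.

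The gap is in the degenerate bookkeeping you defer. It cannot be closed in the stated form, because the corollary is false there.

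\emph{Case $P_{00}\neq 0$.} Your claim that $P_{11}=Q_{01}=0$ is ``excluded by the nondegeneracy of the second equation'' has no basis, since System~\eqref{2} imposes no such condition. In that case compatibility holds automatically and conditions 1--3 all fail, so the corollary asserts a positive solution. But with $P_{00}=Q_{10}=1$ the first equation forces $d_{n-1}=-1$, so there is none.

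\emph{Case $P_{00}=0$.} Your sentence ``a positive root exists iff $P_{11}Q_{01}>0$'' is wrong in both directions. The first equation becomes the constraint $Q_{10}=0$, which you note and then drop. If $Q_{10}\neq 0$ the system is infeasible, yet the stated disjunction fails whenever $P_{11}Q_{01}>0$ and $(P_{10}Q_{01}-P_{11}Q_{00})F(11,01)<0$. Conversely, if $Q_{10}=P_{11}=Q_{01}=0$, every $d_{n-1}>0$ satisfies both equations. Taking $Q_{00}=P_{01}=0$, $P_{10}=1$, $Q_{11}=-1$ makes the product $-1$, so positive solutions exist, while $P_{11}Q_{01}\le 0$ makes the corollary assert none.

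The paper's proof has the same holes; Lemma~\ref{Q0}, Part~2 even omits the constraint $Q_{10}=0$. So your fallback of adding explicit alternatives is the right call. What your argument actually proves is the corollary under the extra hypothesis $P_{11}^2+Q_{01}^2\neq 0$, with the alternative $Q_{10}\neq 0$ added when $P_{00}=0$. The case $P_{11}=Q_{01}=0$ needs its own alternatives:
\begin{enumerate}
\item[(i)] when $P_{00}\neq 0$: either $-Q_{10}/P_{00}\le 0$, or the strict inequality fails at that point;
\item[(ii)] when $P_{00}=Q_{10}=0$: the product $(-d_{n-1}Q_{00}+P_{10})(d_{n-1}P_{01}+Q_{11})$ is nonnegative for all $d_{n-1}>0$.
\end{enumerate}
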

{\bf Proof.} For the proof, it is enough to show that in the case $P_{00}P_{11} + Q_{10}Q_{01} = 0$, the condition $P_{11}Q_{01} \leq 0$ contradicts the conditions of Lemma \ref{Q0}, namely, the inequality $P_{11}Q_{01} - P_{00}Q_{10} > 0$. Indeed, for $P_{11}Q_{01} - P_{00}Q_{10} > 0$, we should have $P_{11}Q_{01} > P_{00}Q_{10}$ and $P_{00}Q_{10} < 0$. From $P_{00}P_{11} + Q_{10}Q_{01}  = 0$ we obtain $Q_{10}Q_{01} = -P_{00}P_{11}$. Now $P_{11}Q_{01}P_{00}Q_{10} = - (P_{00}P_{11})^2 \leq 0$. Hence $P_{11}Q_{01}$ and  $P_{00}Q_{10}$ are of different signs, excluding the case $P_{11} = 0$ which implies $Q_{01} = 0$. $\square$

\subsection{Degenerate case} Here, we consider the conditions, that are necessary and sufficient for all the positive solutions $d^0_{1}, \ \ldots, \ d^0_{n-1}$ of the equation $F_0=0$ to satisfy $F_1(d^0_{1}, \ \ldots, d^0_{n-1})>0$.
\begin{theorem}\label{deg} The system 
$$ \begin{cases}F_0=0; \\
F_1 = 0 \end{cases}$$ have no common positive solution $(d^0_{1}, \ \ldots, d^0_{n-1})$, $d^0_i > 0$ if and only if one of the following cases holds.
\begin{enumerate}
    \item[\rm 1.] Either $F_{00} > 0$ and the equation
        $$F(00, 10) = 0, $$
        does not have any positive solutions $(d^0_{1}, \ \ldots, \ d^0_{n-2})$, for which $$G(00, 10) < 0.$$
        or $F_{00} = 0$ and $F_{10} > 0$.
    \item[\rm 2.] Either $F_{01} > 0$ and the equation
        $$F(01, 11) = 0$$
        does not have any positive solutions, for which $$G(01, 11) < 0.$$
        or $F_{01} = 0$ and $F_{11} > 0$.
    \item[\rm 3.] $\begin{cases}F(00, 10) = F(01, 11) = 0; \\ G(00, 10) < 0; \\ G(01, 11) < 0 \end{cases}$ and $$G(01,11)F_{00} - G(00,10)F_{01} \neq 0.$$
    For $Q_{01}Q_{00} \neq 0$, the last condition is equivalent to $$P_{10}Q_{01} - P_{11}Q_{00} \neq 0.$$
\end{enumerate}
\end{theorem}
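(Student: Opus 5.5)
The plan is to reduce the degenerate system $F_0=F_1=0$ to two parallel copies of the Step~1 problem, one for each branch, and then analyse when they can vanish simultaneously. Since $F_0=P_0^2+Q_0^2$ and $F_1=P_1^2+Q_1^2$, the system $F_0=F_1=0$ at a positive point $(d_1,\ldots,d_{n-1})$ is equivalent to $P_0=Q_0=P_1=Q_1=0$. By the Step~2 recurrences \eqref{expsyst2a} and \eqref{expsyst2b}, this means that the single variable $d_{n-1}>0$ simultaneously solves
$$\begin{cases}-d_{n-1}Q_{00}+P_{10}=0,\\ d_{n-1}P_{00}+Q_{10}=0,\end{cases}\qquad \begin{cases}-d_{n-1}Q_{01}+P_{11}=0,\\ d_{n-1}P_{01}+Q_{11}=0,\end{cases}$$
with the same $(d_1,\ldots,d_{n-2})$. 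Each of these two subsystems is exactly the Step~1 system \eqref{sy} for the matrices ${\mathbf A}_0$ and ${\mathbf A}_1$, with the index $n-1$ in the role of $n$. I would therefore apply the argument of Theorem~\ref{Step1} (both directions) verbatim to each branch separately.

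For branch $0$, the argument of Theorem~\ref{Step1} says the following: $P_0=Q_0=0$ has a positive solution iff either $F_{00}>0$, $F(00,10)=0$ and $G(00,10)<0$, or $F_{00}=F_{10}=0$. In the first case the solution is unique, $d_{n-1}=-G(00,10)/F_{00}$, by the remark after Theorem~\ref{Step1}. In the second case every $d_{n-1}$ works. The same holds for branch $1$ with $F_{01}$, $F(01,11)$, $G(01,11)$, and in the nondegenerate case $d_{n-1}=-G(01,11)/F_{01}$. Negating gives Conditions~1 and~2 of the statement, read as the assertions that branch $0$ (respectively, branch $1$) alone has no positive zero; each of them separately rules out a common solution.

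It remains to handle the situation where both branches admit positive zeros, and to show that they fail to share a $d_{n-1}$ exactly when the two forced values differ. In the nondegenerate–nondegenerate subcase, a common zero exists iff $-G(00,10)/F_{00}=-G(01,11)/F_{01}$, i.e. iff $G(01,11)F_{00}-G(00,10)F_{01}=0$. This yields Condition~3. If one branch is degenerate, with all four of its polynomials vanishing, then any $d_{n-1}$ produced by the other branch works, so a common solution exists; this matches the fact that Condition~3 requires both $G$'s to be strictly negative, which forces $F_{00},F_{01}>0$. For the reformulation under $Q_{01}Q_{00}\neq0$, I would use the first equations of each subsystem, $d_{n-1}=P_{10}/Q_{00}$ and $d_{n-1}=P_{11}/Q_{01}$. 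These agree iff $P_{10}Q_{01}-P_{11}Q_{00}=0$, and by the uniqueness above they coincide with the $-G/F$ formulas.

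The main obstacle is the logical bookkeeping, not any computation. All conditions are pointwise in $(d_1,\ldots,d_{n-2})$, and the ``one of the following cases'' disjunction must be read at each positive point, with the quantifier ``for all positive $(d_1,\ldots,d_{n-2})$'' outside. I would state this explicitly, and check that the three cases exhaust the complement of ``both branches share a zero''. In particular, Condition~3 must be read as: both branches have their unique zeros, and these zeros differ.
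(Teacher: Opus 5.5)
Your proposal is correct and follows the paper's proof. Both arguments reduce to the zero set in $d_{n-1}$ of each branch: the single point $-G(00,10)/F_{00}$ (resp.\ $-G(01,11)/F_{01}$), all of $(0,\infty)$ in the degenerate case, or the empty set. Both then intersect these sets, and both obtain the $Q_{00}Q_{01}\neq 0$ reformulation by comparing $d_{n-1}=P_{10}/Q_{00}$ with $d_{n-1}=P_{11}/Q_{01}$.

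The only local difference is how each branch's zero set is found. You solve the linear system $P_0=Q_0=0$ as in Theorem~\ref{Step1}, whereas the paper reads it off the quadratic $F_0=d_{n-1}^2F_{00}+2d_{n-1}G(00,10)+F_{10}$, whose discriminant is $-4F^2(00,10)$. Your pointwise reading of the disjunction, with the quantifier over positive $(d_1,\ldots,d_{n-2})$ outside, is the right one and is implicit in the paper's argument.
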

{\bf Proof.} Let $F_0 = 0$. By Formula \eqref{JS},
$$ F_{0} = d_{n-1}^2F_{00} + 2d_{n-1}G(00,10) + F_{10}.$$ 
Since $F_{s} = P^2_{s} + Q^2_{s} \geq 0$ for every binary string $s$ and all variables $d_i \in {\mathbb R}$, we obtain that both $F_{00}, F_{10} \geq 0$. Let $F_{00} > 0$. Hence the equation $F_0 = 0$ is solvable in variable $d_{n-1}$ and the solution is positive if and only if the following conditions hold:
$$D = 4G^2(00,10) - 4F_{00}F_{10} \geq 0 \qquad \mbox{and} \qquad G(00,10) < 0.$$
Since $$D = 4((P_{00}Q_{10} - Q_{00}P_{10})^2 - (P_{00}^2 +Q_{00}^2)(P_{10}^2 + Q_{10}^2)) = $$ $$ -4(P_{00}P_{10} + Q_{00}Q_{10})^2 = - 4F^2(00,10),$$
we obtain $D > 0$ is impossible and $D = 0$ is possible if and only if $F(00,10) = 0$. Hence for $F_{00} \neq 0$ we obtain the conditions:
$$\begin{cases}F(00, 10) = 0; \\ G(00,10) < 0. \end{cases}$$
The unique solution in this case is $$d_{n-1} = -\frac{G(00,10)}{F_{00}}.$$
The condition $F_{00} = 0$ immediately implies $G(00,10) = 0$. Then the equation $F_0 = 0$ is solvable if and only if $F_{10} = 0$, and any $d_{n-1} > 0$ will satisfy it. 

Applying Formula \eqref{JS} to $F_1$, we obtain
$$ F_{1} = d_{n-1}^2F_{01} + 2d_{n-1}G(01,11) + F_{11}.$$ 
Let $F_{01} \neq 0$. The same reasoning shows that the equation $F_1 = 0$ is solvable if and only if $F(01,11) = 0$, with the unique solution $$d_{n-1} = -\frac{G(01,11)}{F_{01}},$$
which is positive if and only if $G(01,11)< 0$.
For all other values of $d_{n-1}$, $F_1 > 0$.

If $F_{01} = 0$, then $F_1 \equiv F_{11}$, the equation $F_1 = 0$ is solvable if and only if $F_{11} = 0$ and any $d_{n-1} > 0$ will satisfy it. 

Hence, for the simultaneous solvability of the system $$\begin{cases} F_0 = 0 \\ F_1 = 0 \end{cases}$$ of quadratic equations in $d_{n-1}$ we obtain the following cases:
\begin{enumerate}
\item[\rm 1.] $1$-degenerate case ($F_{00} \neq 0$, $F_{01} \neq 0$, both the equations $F_0 = 0$ and $F_1 = 0$ are solvable in positive values and their solutions coincide):
$$\begin{cases}F(01,11) = F(00,10) = 0; \\ G(00,10) < 0; \\ G(01,11) < 0;\\
-\frac{G(00,10)}{F_{00}} = -\frac{G(01,11)}{F_{01}}
\end{cases}$$
Considering the last equality, we denote:
$$E = G(01,11)F_{00} - G(00,10)F_{01}$$
Expressing $Q_{11} = -\frac{P_{11}P_{01}}{Q_{01}}$ (assuming $Q_{01} \neq 0$) from the condition $F(01,11) = 0$ and $Q_{10} = -\frac{P_{00}P_{10}}{Q_{00}}$ (assuming $Q_{00} \neq 0$) from the condition $F(00, 10) = 0$, and substituting, we get:
$$G(01,11) = P_{01}Q_{11} - Q_{01}P_{11} = -P_{01}\frac{P_{11}P_{01}}{Q_{01}}- Q_{01}P_{11} = $$ $$ -\frac{P_{11}}{Q_{01}}(P_{01}^2 + Q^2_{01}) = -\frac{P_{11}}{Q_{01}}F_{01}.$$
$$ G(00,10) = P_{00}Q_{10} - Q_{00}P_{10} = -P_{00}\frac{P_{00}P_{10}}{Q_{00}}- Q_{00}P_{10} = $$ $$ -\frac{P_{10}}{Q_{00}}(P_{00}^2 + Q^2_{00}) = -\frac{P_{10}}{Q_{00}}F_{00}.$$
Then substituting into $E$, we obtain:
$$E = F_{00}F_{01}\left(-\frac{P_{11}}{Q_{01}} + \frac{P_{10}}{Q_{00}}\right).$$
\item[\rm 2.] One of the equations $F_0 = 0$ or $F_1 = 0$ has a unique solution while the other is degenerate. This case gives one of the following systems:
$$\mbox{either} \ \begin{cases}F(00,10) = 0; \\ G(00,10) < 0; \\ F_{01} = F_{11} = 0;\\
\end{cases} \qquad \mbox{or} \qquad \begin{cases}F(01,11) = 0; \\ G(01,11) < 0; \\ F_{00} = F_{10} = 0;\\
\end{cases}$$
Then either $d_{n-1} = -\frac{G(00,10)}{F_{00}}$ or $d_{n-1} = -\frac{G(01,11)}{F_{01}}$.
\item[\rm 3.] $2$-degenerate case. Both the equations $F_0$ and $F_1$ are degenerate. Then $$F_{00} = F_{01} = F_{10} = F_{11} = 0$$ and the solution $d_{n-1}$ is any real value.
\end{enumerate}
$\square$

Note that the above construction also gives an alternative proof of Theorem \ref{Step1} through the analysis of the quadratic expression \eqref{Jrec}. 

\section{Hierarchy of sufficient conditions and examples of $D$-stable matrices}
\subsection{More recurrent relations}
Let ${\mathbf A} \in {\mathcal M}^{n \times n}$ and ${\mathbf A}_s$, ${\mathbf A}_t$ be two matrices from the set of parameter-dependent matrices, obtained by applying $k$ $(1 \leq k \leq n)$ steps of Delete/Zero algorithm to ${\mathbf A}$. They are labeled by two binary strings $s$ and $t$, each of length $k$.
As before, we use the notations $$P_s := {\rm Re}\det({\mathbf A}_s), \qquad Q_s := {\rm Im}\det({\mathbf A}_s);$$ $$P_t := {\rm Re}\det({\mathbf A}_t), \qquad Q_t := {\rm Im}\det({\mathbf A}_t);$$ 
$$F(s,t) = P_sP_t + Q_sQ_t; \qquad G(s,t) = P_sQ_t - P_tQ_s. $$
Obviously, $F(s,t) = F(t,s)$ and $G(s,t) = - G(t,s)$.

\begin{lemma}[Recurrence relations for $F(s,t)$ and $G(s,t)$]\label{rec1} For any length $k$, $1 \leq k \leq n-1$ and any two binary strings $s = \{s_{n-k+1}, \ \ldots, \ s_n\}$ and $t = \{t_{n-k+1}, \ \ldots, \ t_n\}$, where $s_i, \ t_i \in \{0,1\},$
 the following recurrence relations hold for every set of real variables $(d_1, \ \ldots, \ d_{n-k})$:
\begin{equation}\label{expsystF}F(s,t) = F(0s,0t)d^2_{n-k} + (G(0s,1t) - G(1s,0t)) d_{n-k} + F(1s,1t); \end{equation}
\begin{equation}\label{expsystG}G(s,t) = G(0s,0t)d^2_{n-k} + ( F(1s,0t) - F(0s,1t)) d_{n-k} + G(1s,1t).\end{equation}
\end{lemma}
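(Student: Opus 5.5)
The plan is to reduce everything to the complex-number form of Lemma \ref{rec}. By \eqref{expsyst1} applied to the branch labelled $s$ (and likewise $t$), at the processing of index $n-k$ we have
$$\det({\mathbf A}_s) = P_s + iQ_s = (P_{1s} - d_{n-k}Q_{0s}) + i(Q_{1s} + d_{n-k}P_{0s}) = i d_{n-k}\det({\mathbf A}_{0s}) + \det({\mathbf A}_{1s}).$$
This is just Lemma \ref{explemma} applied to ${\mathbf A}_s$. It holds identically as a polynomial identity in $d_1, \ldots, d_{n-k}$, so it is valid for all real values, not only positive ones. Each branch ${\mathbf A}_s$ still has the form (real matrix) $+\, i\,$(diagonal), with $d_{n-k}$ sitting in the diagonal position of the row that corresponds to index $n-k$. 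Identity \eqref{expana!} was stated for a general real diagonal matrix. Its derivation splits into cases according to invertibility, but its conclusion is a polynomial identity, so by continuity (or density) it holds everywhere. I would note this briefly so that the "every set of real variables" claim is justified.

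Next, write $z_s := \det({\mathbf A}_s)$ and $z_t := \det({\mathbf A}_t)$, and use the definitions $F(s,t) + iG(s,t) = \overline{z_s}\, z_t$ from \eqref{FGs}. Abbreviate $d := d_{n-k}$ (real). Then
$$\overline{z_s}\, z_t = \bigl(-i d\, \overline{z_{0s}} + \overline{z_{1s}}\bigr)\bigl(i d\, z_{0t} + z_{1t}\bigr) = d^2\, \overline{z_{0s}} z_{0t} + i d\bigl(\overline{z_{1s}} z_{0t} - \overline{z_{0s}} z_{1t}\bigr) + \overline{z_{1s}} z_{1t}.$$
Since $d$ is real, the real and imaginary parts can be taken term by term. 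The first and last terms give $F(0s,0t)d^2 + F(1s,1t)$ in the real part and $G(0s,0t)d^2 + G(1s,1t)$ in the imaginary part.

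The middle term needs care with signs. Using $i(a+ib) = -b + ia$, its real part is $-d\bigl(G(1s,0t) - G(0s,1t)\bigr) = d\bigl(G(0s,1t) - G(1s,0t)\bigr)$, which gives \eqref{expsystF}. Its imaginary part is $d\bigl(F(1s,0t) - F(0s,1t)\bigr)$, which gives \eqref{expsystG}. As a sanity check, I would verify that the case $s=0$, $t=1$ (strings of length one) reproduces \eqref{Fexp} and \eqref{Gexp}. This is where an inconsistency may surface, since \eqref{Gexp} is written with $F(01,10) - F(00,11)$. Because $F$ is symmetric, $F(01,10) = F(10,01)$, which matches $F(1s,0t)$ for $s=0$, $t=1$, so the two agree.

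The only real obstacle is this sign bookkeeping in the cross term, together with a justification that the recursion holds for real rather than only positive $d_i$. The latter follows from the polynomial-identity remark above.
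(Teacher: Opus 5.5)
Your proposal is correct and is essentially the paper's argument. The paper's induction is only nominal, since its inductive step never uses the hypothesis; that step is exactly your computation: substitute the recurrences of Lemma \ref{rec} into $F(s,t)$ and $G(s,t)$ and collect powers of $d_{n-k}$. Your complex packaging $\overline{z_s}z_t = d^2\overline{z_{0s}}z_{0t} + id\bigl(\overline{z_{1s}}z_{0t} - \overline{z_{0s}}z_{1t}\bigr) + \overline{z_{1s}}z_{1t}$ yields both identities at once and avoids the sign slips in the paper's real expansion, such as its stated formula for $F(0s,1t)$. Your polynomial-identity remark also properly justifies extending from positive to arbitrary real $d_i$.
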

{\bf Proof.} For the proof, we use induction on the length of the binary strings $k$. 

Let $k = 1$. Then both of the strings contains just one element $s = s_n$ and $t = t_n$ and we have four options:  $s=0$, $t = 1$; $s = 1$, $t = 0$, $s = t = 0$ and $s=t=1$. 

Consider the first two options: $s=0$, $t = 1$ and $s = 1$, $t = 0$. Note that $F(0,1) = F(1,0)$ and $G(0,1) = - G(1,0)$. Therefore Formulas \eqref{expsystF} and \eqref{expsystG} are already proven (see \eqref{Fexp} and \eqref{Gexp}): 
$$ F(0,1) = F(00,01)d^2_{n-1} + (G(00,11) - G(10,01)) d_{n-1} + F(10,11);$$
$$G(0,1) = G(00,01)d^2_{n-1} + ( F(01,10) - F(00,11)) d_{n-1} + G(10,11).$$
Now consider the second two options: $s=t=0$ and $s=t=1$. In this case, we have by \eqref{JS}:
$$F(s,s) = d_{n-1}^2F(0s,0s) + 2d_{n-1}G(0s,1s) + F(1s,1s), \qquad s = 0, \ 1.$$
Hence Formulas \eqref{expsystF} and \eqref{expsystG} hold.

Now let Formulas \eqref{expsystF} and \eqref{expsystG} hold for any binary strings $t$ and $s$ of length $n-k$. Prove them for the length $n-k+1$. 
By Lemma \ref{rec},
$$\begin{cases}P_{s} = -d_{n-k}Q_{0s} + P_{1s}; \\
Q_{s} = d_{n-k}P_{0s} + Q_{1s}.
\end{cases} \qquad \begin{cases}P_{t} = -d_{n-k}Q_{0t} + P_{1t}; \\
Q_{t} = d_{n-k}P_{0t} + Q_{1t}.
\end{cases}$$ 
Substituting into Formula \eqref{expsystF} and collecting with respect to the powers of $d_{n-1}$, we obtain for $F(s,t)$ the following quadratic expression in variable $d_{n-1}$:
$$F(s,t) = P_sP_t + Q_sQ_t = (P_{0s}P_{0t} + Q_{0s}Q_{0t})d_{n-1}^2 + $$
$$(P_{0s}Q_{1t} - Q_{0s}P_{1t} + P_{0t}Q_{1s} - Q_{0t}P_{1s})d_{n-1} + P_{1s}P_{1t} +Q_{1s}Q_{1t}.$$
Using the notations \eqref{FGs}, namely
$$F(0s,0t) = P_{0s}P_{0t} + Q_{0s}Q_{0t}; \qquad F(1s,1t) = P_{1s}P_{1t} +Q_{1s}Q_{1t}; $$
$$G(0s, 1t) = P_{0s}Q_{1t} - Q_{0s}P_{1t}; \qquad G(1s,0t)= P_{1s}Q_{0t} - Q_{1s}P_{0t} $$
we obtain exactly Formula \eqref{expsystF}:
$$ F(s,t) = F(0s,0t)d^2_{n-1} + (G(0s,1t) + G(0t,1s)) d_{n-1} + F(1s,1t).$$
Analogically, for $G(s,t)$, we obtain:
$$G(s,t) = (P_{0s}Q_{0t} - Q_{0s}P_{0t})d^2_{n-1} +$$
$$(P_{1s}P_{0t} + Q_{1s}Q_{0t} -(P_{0s}P_{1t} + Q_{0s}Q_{1t}))d_{n-1} + P_{1s}Q_{1t} - Q_{1s}P_{1t}.$$
Since
$$G(0s,0t) = P_{0s}Q_{0t} - Q_{0s}P_{0t}; \qquad G(1s,1t) = P_{1s}Q_{1t} - Q_{1s}P_{1t}; $$
$$F(0s,1t) = P_{0s}P_{1t} - Q_{0s}Q_{1t}; \qquad F(1s,0t)= P_{1s}P_{0t} + Q_{1s}Q_{0t} $$
we obtain:
$$G(s,t) = G(0s,0t)d^2_{n-1} + ( F(1s,0t) - F(0s,1t)) d_{n-1} + G(1s,1t),
$$which is exactly \eqref{expsystF}. $\square$

\subsection{Branched tests for $D$-stability}
Consider the branched process, starting from $F(0,1)$:

{\bf Step 1.} Applying \eqref{Fexp}, we collect $F(0,1)$ according to the powers of $d_{n-1}$ and extract the coefficients: $$F(0,1) = F(00,01)d^2_{n-1} + (G(00,11) - G(10,01)) d_{n-1} + F(10,11) \mapsto (c_1, \ c_2, \ c_3),$$
where $c_1 :=  F(00,01)$, $c_2 := G(00,11) - G(10,01)$, $c_3 := F(10,11)$. 

{\bf Step 2.} Applying Lemma \ref{rec1} to each of the branches $c_i$, $i = 1, \ 2, \ 3$, collecting with respect to the powers of $d_{n-2}$ and extracting the coefficients, we get:
$$c_1 = F(00,01) = F(000,001)d^2_{n-2} + $$$$(G(000,001) - G(100,001)) d_{n-2} + F(100,101) \mapsto (c_{11}, \ c_{21}, \ c_{31}),$$
where $c_{11} =  F(000,001)$, $c_{21} = G(000,001) - G(100,001)$, $c_{31} = F(100,101)$. 
$$c_2 = G(00,11) - G(10,01) = G(000,011)d^2_{n-2} + (F(100,011) - F(000,111)) d_{n-2} +$$ $$ G(100,111) - G(010,001)d^2_{n-2} - ( F(110,001) - F(010,101)) d_{n-2} - G(110,101) \mapsto $$
$$((c_{12}, \ c_{22}, \ c_{32})),$$
where $c_{12} =  G(000,011)- G(010,001)$, $c_{22} = F(100,011) - F(000,111)- F(110,001) + F(010,101)$, $c_{32} = G(100,111) - G(110,101)$.
$$c_3 = F(10,11) = F(010,011)d^2_{n-2} + $$$$(G(010,011) - G(110,011)) d_{n-2} + F(110,111) \mapsto (c_{13}, \ c_{23}, \ c_{33}),$$
where $c_{13} =  F(010,011)$, $c_{23} = G(010,011) - G(110,011)$, $c_{33} = F(110,111)$. 

{\bf Step $i$.} After Step $i-1$, we have $3^{i-1}$ polynomials $c_{k_{i-1}\ldots k_{1}}$, where $k_j \in \{1,2,3\}$. Applying Lemma \ref{rec1} to each of the branches $c_{k_{i-1}\ldots k_{1}}$, collecting with respect to the powers of $d_{n-i}$ and extracting the coefficients, we obtain  $$c_{k_{i-1}\ldots k_{1}} \mapsto (c_{1k_{i-1}\ldots k_{1}}, c_{2k_{i-1}\ldots k_{1}}, c_{3k_{i-1}\ldots k_{1}}).$$

Algorithm stops at $i = n-1$, giving at most $2*3^{n-2}$ determinantal expressions.

By analogy, the branched process, starting from $G(0,1)$ is as follows:

{\bf Step 1.} Applying \eqref{Gexp}, we get: $$G(0,1) = G(00,01)d^2_{n-1} + (F(10,01) - F(00,11)) d_{n-1} + G(10,11) \mapsto (\hat{c}_1, \ \hat{c}_2, \ \hat{c}_3),$$
where $\hat{c}_1 :=  G(00,01)$, $\hat{c}_2 := F(10,01) - F(00,11)$, $\hat{c}_3 := G(10,11)$. 

{\bf Step 2.} Applying Lemma \ref{rec1} to each of the branches $\hat{c}_i$, $i = 1, \ 2, \ 3$, collecting with respect to the powers of $d_{n-2}$ and extracting the coefficients, we get:
$$\hat{c}_1 = G(00,01) = G(000,001)d^2_{n-2} + $$$$(F(100,001) - F(000,101)) d_{n-2} + G(100,101) \mapsto (\hat{c}_{11}, \ \hat{c}_{21}, \ \hat{c}_{31}),$$
where $\hat{c}_{11} =  G(000,001)$, $\hat{c}_{21} = F(100,001) - F(000,101)$, $\hat{c}_{31} = G(100,101)$. 
$$\hat{c}_2 = F(10,01) - F(00,11) =$$ $$ F(010,001)d^2_{n-2} + (G(010,101) - G(110,001) ) d_{n-2} + G(110,101) - $$ $$F(000,011)d^2_{n-2} - (G(000,111) - G(100,011)) d_{n-2} - F(100,111) \mapsto $$
$$(\hat{c}_{12}, \ \hat{c}_{22}, \ \hat{c}_{32}),$$
where $\hat{c}_{12} = F(010,001) -  F(000,011)$, $\hat{c}_{22} = G(010,101) - G(110,001) - G(000,111) + G(100,011)$, $\hat{c}_{32} =  F(110,101) - F(100,111)$.
$$\hat{c}_3 = F(10,11) = F(010,011)d^2_{n-2} + $$$$(G(010,011) - G(110,011)) d_{n-2} + G(110,111) \mapsto (\hat{c}_{13}, \ \hat{c}_{23}, \ \hat{c}_{33}),$$
where $\hat{c}_{13} =  G(010,011)$, $\hat{c}_{23} = F(110,011) - F(010,011)$, $\hat{c}_{33} = G(110,111)$. 

{\bf Step $i$.} Having $3^{i-1}$ polynomials $\hat{c}_{k_{i-1}\ldots k_{1}}$, where $k_j \in \{1,2,3\}$, we apply Lemma \ref{rec1} to each of the branches $\hat{c}_{k_{i-1}\ldots k_{1}}$. Collecting with respect to the powers of $d_{n-i}$ and extracting the coefficients, we obtain  $$\hat{c}_{k_{i-1}\ldots k_{1}} \mapsto (\hat{c}_{1k_{i-1}\ldots k_{1}}, \hat{c}_{2k_{i-1}\ldots k_{1}}, \hat{c}_{3k_{i-1}\ldots k_{1}}).$$

As the previous one, this algorithm stops at $i = n-1$, giving at most $2*3^{n-2}$ determinantal expressions.

Denote $c_{\emptyset}:=F(0,1)$ and $\hat{c}_{\emptyset}:=G(0,1)$.

\begin{theorem} Let ${\mathbf A} \in {\mathcal M}^{n\times n}$ be stable. Then for $\mathbf A$ be $D$-stable, it is sufficient that for some $i$, $0 \leq i \leq n$, at least one of Test I or II holds. 
\begin{enumerate}
    \item[\rm Test I.] All $3^i$ polynomial coefficients $c_{k_i\ldots k_1}$, $k_j \in \{1, \ 2, \ 3\}$ satisfy
    $$c_{k_i\ldots k_1}(d_1, \ldots, d_{n-i-1} ) \geq 0$$  
    for every set of positive variables $d_1, \ \ldots, \ d_{n-i-1}$, with at least one inequality being strict.
    \item[\rm Test II.] All $3^i$ polynomial coefficients $\hat{c}_{k_i\ldots k_1}$, $k_j \in \{1, \ 2, \ 3\}$ satisfy
    $$\hat{c}_{k_i\ldots k_1}(d_1, \ldots, d_{n-i-1} ) \geq 0$$  
    for every set of positive variables $d_1, \ \ldots, \ d_{n-i-1}$, with at least one inequality being strict.
\end{enumerate}
\end{theorem}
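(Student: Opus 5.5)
The plan is to reduce the statement to Corollary \ref{suff} by unwinding the branched expansion. By construction, $c_{\emptyset}=F(0,1)$ is a polynomial in $d_1,\ldots,d_{n-1}$. After $i$ steps of the branched process it can be written as
$$c_{\emptyset}=\sum_{k_i,\ldots,k_1\in\{1,2,3\}} c_{k_i\ldots k_1}(d_1,\ldots,d_{n-i-1})\,\prod_{j=1}^{i} d_{n-j}^{\,3-k_j}.$$
In words, each branch index $k_j=1,2,3$ records whether the coefficient came with $d_{n-j}^2$, $d_{n-j}$, or $d_{n-j}^0$. I will prove this representation by induction on $i$, using Lemma \ref{rec1} at each step. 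The case $i=0$ is trivial. For the inductive step, the key point is that each $c_{k_{i-1}\ldots k_1}$ is a linear combination (with coefficients $\pm1$) of $F(s,t)$ and $G(s,t)$ for strings of length $i$. Applying \eqref{expsystF}--\eqref{expsystG} to each term and collecting by powers of $d_{n-i}$ produces exactly the three coefficients $c_{1k_{i-1}\ldots k_1},c_{2k_{i-1}\ldots k_1},c_{3k_{i-1}\ldots k_1}$, which depend only on $d_1,\ldots,d_{n-i-1}$. The same argument gives the analogous representation of $\hat c_{\emptyset}=G(0,1)$ through the $\hat c_{k_i\ldots k_1}$.

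Next, assume Test I holds for some $i$. Fix any positive $(d_1,\ldots,d_{n-1})$. Every monomial factor $\prod_j d_{n-j}^{3-k_j}$ is strictly positive, and every coefficient $c_{k_i\ldots k_1}$ is nonnegative at $(d_1,\ldots,d_{n-i-1})$. Hence $F(0,1)\geq 0$. Moreover, at least one coefficient is strictly positive, and its monomial is positive, so $F(0,1)>0$ on the whole open positive orthant. Condition 1 of Corollary \ref{suff} then gives $D$-stability of the stable matrix $\mathbf A$. Test II is handled identically: it yields $G(0,1)>0$ on the positive orthant, and Condition 2 of Corollary \ref{suff} applies.

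The main obstacle is the phrase ``at least one inequality being strict''. It must be read as: for each positive point, some coefficient is strictly positive there. A reading in which a single coefficient is strict at only some points would not force $F(0,1)>0$ everywhere. I would adopt the pointwise reading explicitly, noting that a coefficient which is strictly positive on the whole orthant is a special case. Two boundary cases also need comment. For $i=n-1$ the coefficients are constants (combinations of principal minors), and the argument goes through unchanged. The value $i=n$ in the statement is vacuous, since there are only $n-1$ variables; I would restrict to $0\le i\le n-1$. A last point is to check that the stated expansions for $c_2$ and $\hat c_2$ at Step 2 agree with Lemma \ref{rec1}, since there are sign slips in the displayed formulas. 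The proof only uses that the coefficients are defined via Lemma \ref{rec1}, so I would define them that way and not rely on the printed expressions.
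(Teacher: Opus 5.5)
Your proposal is correct and follows the paper's argument. The paper's one-line proof only observes that the $c_{k_i\ldots k_1}$ (resp.\ $\hat c_{k_i\ldots k_1}$) are iterated coefficients of quadratics in $d_{n-1},\ldots,d_{n-i}$, so nonnegativity with pointwise strictness forces $F(0,1)>0$ (resp.\ $G(0,1)>0$) on the open positive orthant, and Corollary \ref{suff} then applies. Your explicit expansion into positive monomials and your three clarifications are sound refinements of that same argument: the pointwise reading of ``at least one inequality being strict'', the restriction to $0\le i\le n-1$, and defining the coefficients as the actual polynomial coefficients rather than using the misprinted Step 2 formulas.
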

{\bf Proof.} For the proof, it is enough to observe, that $c_{k_i\ldots k_1}$ as well as $\hat{c}_{k_i\ldots k_1}$ are obtained as the coefficients of some quadratic inequalities. $\square$

By stopping the recursion at depth $i$, we obtain tests of increasing power and complexity, tunable to the user's computational resources.

Note, that $D$-stability tests \cite{PAV1}, \cite{PAV2} also propose symbolic-numeric approach to $D$-stability characterization, optimizing the verification of Johnsons conditions (Theorem \ref{stabcond}). The two approaches are complementary. Pavani's symbolic technique could be potentially applied through the recursive framework to verify the conditions on each branch.  

\subsection{Example}
Let $${\mathbf A} = \begin{pmatrix}2 & -2 & 1 & 0 & 0 \\ 1 & 0 & 0 & 0 & -1 \\ 1 & -1 & 1 & 0 & 0 \\ 0 & -1 & 0 & 1 & -1 \\ 0& 1 & 0 & 0 & 2 \\\end{pmatrix}.$$
It is known to be $D$-stable (see \cite{OLP}, p. 423). We confirm this result by using Tests I, stopping the recursion at level $n-2 = 3$.
\begin{enumerate}
    \item[\rm Step 0.] For parameters $\{d_1, \ d_2, \ d_3, \ d_4\}$, we calculate symbolic polynomial $$ F(0,1) = 3 + 3 d_4^2 - 4 d_1 d_2 - 4 d_4^2 d_1 d_2 + 2 d_2^2 + 2 d_4^2 d_2^2 + 2 d_1^2 d_2^2 + 
 2 d_4^2 d_1^2 d_2^2 + $$$$ d_1 d_3 + d_4^2 d_1 d_3 + 4 d_1 d_2^2 d_3 + 4 d_4^2 d_1 d_2^2 d_3 + 12 d_3^2 + 
 12 d_4^2 d_3^2 - 8 d_1 d_2 d_3^2 - 8 d_4^2 d_1 d_2 d_3^2 + $$$$ 8 d_2^2 d_3^2 + 8 d_4^2 d_2^2 d_3^2 + 
 2 d_1^2 d_2^2 d_3^2 + 2 d_4^2 d_1^2 d_2^2 d_3^2.$$
      Some of the coefficients of the monomials in $F(0,1)$ are negative, hence Test I on level $n-1$ will fail.
   \item[\rm Step 1.] Collect according to the powers of $d_4$ and extract the coefficients $c_1, \ c_2, \ c_3$:
   $$c_1 = 3 - 4 d_1 d_2 + 2 d_2^2 + 2 d_1^2 d_2^2 + d_1 d_3 + 4 d_1 d_2^2 d_3 + 12 d_3^2 - 
    8 d_1 d_2 d_3^2 + 8 d_2^2 d_3^2 + 2 d_1^2 d_2^2 d_3^2; $$
    $$ c_2 = 0;$$
   $$c_3 = 3 - 4 d_1 d_2 + 2 d_2^2 + 2 d_1^2 d_2^2 + d_1 d_3 + 4 d_1 d_2^2 d_3 + 12 d_3^2 - 
 8 d_1 d_2 d_3^2 + 8 d_2^2 d_3^2 + 2 d_1^2 d_2^2 d_3^2.$$
 Note that $c_1 \equiv c_3$ for all $d_1, \ d_2, \ d_3$.
   \item[\rm Step 2.] Collect $c_1$ according to the powers of $d_3$ and extract $c_{11}$, $c_{21}$, $c_{31}$: $$c_{11} = 12 - 8 d_1 d_2 + 8 d_2^2 + 2 d_1^2 d_2^2;$$ $$c_{21} = d_1 + 4 d_1 d_2^2; $$ $$c_{31} = 3 - 4 d_1 d_2 + 2 d_2^2 + 
 2 d_1^2 d_2^2.$$ 
 Note, that $c_{21} > 0$ for all positive values $d_1, d_2$ and stop checking this branch.
 \item[\rm Step 3.] Consider the coefficients $c_{11}$ and $c_{31}$. Collecting them according to $d_2$, we obtain 
$$c_{11} = 3 - 4 d_1 d_2 + (2 + 2 d_1^2) d_2^2$$ and 
$$c_{31} = 12 - 8 d_1 d_2 + (8 + 2 d_1^2) d_2^2.$$ 
Solving the inequality $D< 0$ for both quadratic equations, we obtain $16 d_1^2 - 12 (2 + 2 d_1^2) < 0 $ and $64 d_1^2 - 48 (8 + 2 d_1^2) < 0 $ for all positive values $d_1$. Hence both $c_{11}$ and $c_{31}$ are positive for all positive values of $d_1$ and $d_2$. We conclude that $\mathbf A$ is $D$-stable.
\end{enumerate}
This example illustrates the key feature of our framework, namely, the ability to stop the recursion early. For a user with limited computational resources, testing the conditions at depth $i$ (involving $3^i$ polynomials in 
$n-i-1$ variables) may be feasible even when full recursion to depth $n-1$ is not. Developing an adaptive algorithm that automatically selects the optimal depth based on the matrix structure is a direction for future research.
\subsection{Numerical experiments}
 We test for $D$-stability an amount of randomly generated stable matrices by both Test 1 and Test 2, working on the deepest and most conservative level $n-1$. For $n = 5$, on average, Test 1 identifies one $D$-stable matrix out of every $1000$ stable matrices. Test 2 gives positive answer much more seldom (not a single matrix was found to satisfy both Test 1 and Test 2). For $n=6$, Test 1 found one $D$-stable matrix out of several millions randomly generated stable matrices. For $n=7$, one million of stable matrices was checked for less than $10$ minutes and have found nothing. Hence we conclude that Test 1, applied to $n=5$, yields a non-zero hit rate ($\approx 10^{-3}$), for $n=6$ a marginal detection rate ($\approx 10^{-7}$), and for $n=7$ exactly zero positives in $10^6$ trials. Test 1 is capable of processing millions of $7 \times 7$ matrices in minutes with zero false alarms on random data. Test 2 is rejecting true $D$-stable matrices due to algorithmic limitations.  
 The experiment confirms that $D$-stable matrices form a structurally rare subset of the set of all stable matrices. As the dimension $n$ increases, the conditions, necessary for $D$-stability become significantly more restrictive (see, for example, \cite{HART}).

Consider $5 \times 5$ matrices, verified by Test 1 
 $${\mathbf A} = \begin{pmatrix}
      100.00  & 17.85  &  18.21  & -10.86  & -23.71 \\ 
    2.07  &  27.19  &  -0.47  &  16.65  &  -0.23 \\ 
   19.18 &  -78.22  &  94.07  &  20.13  &  34.86 \\ 
   -4.37  &  13.73  &  -0.70  & 115.66  &  -7.10 \\ 
   21.96  &  7.00  &  39.87  &  10.92  &  55.94\end{pmatrix} $$

and verified by Test 2
 $${\mathbf A} = \begin{pmatrix}
      100.00  & -1.02  &   6.78  &   2.94  &  40.45 \\
    1.48  &  67.37  &   0.37  &  40.32 &  -10.39 \\
   55.47  & -9.16  &  99.71  & -10.81  & -50.60 \\
   19.59  &  14.49  &   9.17  &  63.68  &  52.13 \\
   13.24  & -21.48  & -20.74  &  15.60  & 66.59  \end{pmatrix}.$$

The following $6 \times 6$ matrix is verified by Test 1:
$$ {\mathbf A} : = \begin{pmatrix}
  100.00  &  -5.67  &   1.89  &   2.29  &   9.05 &  -38.42 \\ 
  -14.64  &  53.17  &  11.64   &  1.16  &  -8.78  &  46.73 \\ 
  -34.03  &  -2.32  &  92.53  & -49.82  &  8.70  & -53.98 \\
   21.68  &  19.69  & -21.72  &  28.90  &  6.50  & -16.12 \\ 
   30.69  & -20.02  &  13.80  &  4.41  &  52.42  & -30.91 \\ 
   13.63  &  18.86  & -12.82   &  3.87 &  -11.02  &  88.71 \end{pmatrix}. $$

\section{Conclusions} The method, proposed in this paper, allows to transform the continuous parameter-dependent condition $\det({\mathbf A} + i{\mathbf D}) \neq 0$ into a discrete combinatorial decomposition expressed in terms of principal minors. However, several limitations of the proposed method should be acknowledged, such as:
\begin{enumerate}
    \item[-] exponential complexity in the worst case. Easy stopping can mitigate this, and the structured subclasses (e.g. matrices with zero diagonal entries, zero principal minors or symmetries) reduce complexity considerably.
    \item[-] the computation can be numerically challenging for ill-conditioned matrices.
\end{enumerate}
The recursive framework can be applied to studying related stability concepts, such as:
\begin{enumerate}
    \item[-] diagonal stability (a matrix ${\mathbf A} \in {\mathcal M}^{n \times n}$ is called {\it diagonally stable} if the matrix
$$ {\mathbf W} = {\mathbf D}{\mathbf A} + {\mathbf A}^T{\mathbf D}$$
is negative definite for some positive diagonal matrix ${\mathbf D}$. (see \cite{CROSS});
    \item[-] additive $D$-stability (a matrix ${\mathbf A} \in {\mathcal M}^{n \times n}$ is called {\it additive $D$-stable} if ${\mathbf A} + {\mathbf D}$ is (positive) stable for every positive diagonal matrix ${\mathbf D}$ (see \cite{CROSS});
    \item[-] $D$-hyperbolicity (a matrix ${\mathbf A} \in {\mathcal M}^{n \times n}$ is called {\it $D$-hyperbolic} if the eigenvalues of ${\mathbf D}{\mathbf A}$ have nonzero real parts for every real nonsingular $n \times n$
diagonal matrix ${\mathbf D}$) (see \cite{AB1}, \cite{AB2});
\end{enumerate}\
and other types of stability-preserving structured perturbations (see \cite{KU2}).

\section*{Acknowledgements} The research was supported by the State Research Program "Convergence-2030" of NAS of Belarus (Project No. 1.08.2).

{}
\end{document}